\newcommand{\Z}{\mathbb{Z}}
\newtheorem{theo}{Theorem}
\newtheorem{lemma}[theo]{Lemma}
\newtheorem{cor}[theo]{Corollary}
\def\arraypar#1{\parbox[c]{\textwidth - 2cm}{\centering #1}}
\pgfplotsset{compat=1.6}
\begin{document}

\title{Strict inequality for bond percolation on a dilute lattice with columnar disorder}

\author{M.R.~Hil\'ario$^*$ \hspace{2cm} M. S\'a$^{*,\dagger}$ \hspace{2cm} R.~Sanchis$^*$}
\date{}

\maketitle

\begin{abstract}
We consider a dilute lattice obtained from the usual $\mathbb{Z}^3$ lattice by removing independently each of its columns with probability $1-\rho$.
In the remaining dilute lattice independent Bernoulli bond percolation with parameter $p$ is performed.
Let $\rho \mapsto p_c(\rho)$ be the critical curve which divides the subcritical and supercritical phases.
We study the behavior of this curve near the disconnection threshold $\rho_c = p_c^{\text{site}}(\mathbb{Z}^2)$ and prove that, uniformly over $\rho$ it remains strictly below $1/2$ (the critical point for bond percolation on the square lattice $\mathbb{Z}^2)$.

\vskip.5cm
\noindent \emph{Keywords}: Percolation, random media, columnar disorder, phase transition, strict inequalities.
\vskip.5cm
\noindent
MSC 2010 subject classifications. 82B43, 60K35
\end{abstract}

\section{Introduction}

\subsection{Definition of the model and statement of the result}
\label{ss:intro}

The role played by disorder on the behavior of statistical mechanics models or interacting particle systems has been the subject of research in probability and mathematical physics for over 50 years.
In this context, an important question is to understand how dilution of the underlying lattice affects the critical point or the nature of the phase transition for ferromagnetic spin models \cite{griffiths68, georgii81, georgii84, aizenman87_2} or  percolation models \cite{menshikov87, aizenman91, chayes00}.
Also, since the pioneering work of McCoy and Wu \cite{mccoy68} on the two-dimensional Ising model with random impurities, much attention was drawn to the study of such models when defined on environments that present lower-dimensional disorder \cite{campanino91, kesten12, duminil-copin18} (see Section \ref{ss:motivation} for a discussion on this topic).

Motivated by this general type of questions, we study a percolation model on the cubic lattice $\mathbb{Z}^3$ that was introduced in \cite{grassberger17}.
It is defined in two steps: 
First vertical columns of sites of $\mathbb{Z}^3$ are selected independently with probability $1-\rho$ and all the sites lying in these columns are removed along with the edges incident to them.
In the sequel the bonds connecting neighboring sites in the remaining dilute lattice are declared open or closed independently with probability $p$ and $1-p$, respectively.

Before we state our main result, let us introduce the model mathematically.
Denote by $E(\mathbb{Z}^3)$ the set of edges of the $\mathbb{Z}^3$-lattice.
A bond percolation configuration is an element $\omega\in\{0, 1\}^{E(\mathbb{Z}^3)}$.
Fixed such a configuration, the edge $e$ is said to be \textit{open} if $\omega(e)=1$ and \textit{closed} if $\omega(e)=0$.
Any configuration $\omega$ can be identified to the subgraph of $\mathbb{Z}^3$ that comprises only the open edges.
Maximal connected components of this subgraph are called (open) clusters.

We regard the square lattice $\mathbb{Z}^2$ as a sublattice of $\mathbb{Z}^3$ that is, we identify $\mathbb{Z}^2$ to $\mathbb{Z}^2 \times \{0\}$.
An element $\Lambda\in\{0, 1\}^{\mathbb{Z}^2}$ is called an environment and we also identify it to the set $\{v\in \mathbb{Z}^2\colon \, \Lambda(v)=1\}$.
Given such an environment $\Lambda$ and a vertex $v \in \mathbb{Z}^2$, we say that  the column $\{v\}\times\mathbb{Z} \subset \mathbb{Z}^3$ is \textit{occupied}  if $\Lambda(v)=1$ and \textit{vacant} otherwise.
For each $\Lambda \in \{0,1\}^{\mathbb{Z}^2}$ let $\Lambda\times \mathbb{Z}$ be the subgraph of $\mathbb{Z}^3$ whose sites belong to occupied columns (with edges connecting any pair of such sites that are nearest neighbors).
Let $\nu_\rho$ be the probability measure on $\{0,1\}^{\mathbb{Z}^2}$ under which $\{\Lambda(x)\}_{x\in\mathbb{Z}^2}$ are independent Bernoulli random variables with parameter $\rho$ and let $\rho_c$ be the critical point for Bernoulli site percolation on $\mathbb{Z}^2$ so that, for $\rho \in (\rho_c,1)$ there exists a unique infinite connected component $C(\Lambda) \subset \Lambda$, $\nu_\rho$-a.s. (see \cite{grimmett99} for a comprehensive account on percolation).

Fixed $\Lambda \in \{0,1\}^{\mathbb{Z}^2}$, we denote by $\mathbb{P}^\Lambda_p$ the probability measure on $\{0, 1\}^{E(\mathbb{Z}^3)}$ under which $\{\omega(e)\}_{e\in E(\Lambda\times \mathbb{Z})}$ are independent Bernoulli random variables with mean $p$ and $\omega(e) =0$ for every edge $e$ belonging to $E(\mathbb{Z}^3) \setminus E(\Lambda\times \mathbb{Z})$.
Thus $\mathbb{P}^{\Lambda}_p$ is the law of bond percolation on the dilute $\mathbb{Z}^3$ lattice in the presence of quenched columnar disorder $\Lambda$.
The corresponding annealed measure on $\{0,1\}^{E(\mathbb{Z}^3)}$ is
\[
\mathbb{P}^\rho_p(\cdot):=\int_{\{0,1\}^{\mathbb Z^2}} \mathbb{P}^\Lambda_p (\cdot)d\nu_\rho(\Lambda).
\]
Notice that this model exhibits infinite-range dependencies under the annealed law.
In fact, the state of far away vertical bonds in the same column do not decorrelate with their distance.

Given $\Lambda$, let $p_c(\Lambda) = \inf \big\{p\in [0,1] \colon \mathbb{P}^{\Lambda}_p (\text{$o$ belongs to an infinite cluster}) >0 \big\}$ be the critical percolation threshold for the quenched law $\mathbb{P}^\Lambda_p$.
Similarly, we define $p_c(\rho)$ the critical threshold for the annealed law $\mathbb{P}_{p}^{\rho}$.
Standard ergodicity arguments can be employed to show that $p_c(\rho)=p_c(\Lambda)$, $\nu_\rho$-a.s.
Let us briefly list some immediate properties fulfilled by $\rho\mapsto p_c(\rho)$:
\begin{enumerate}
\item By standard coupling argument $p_c(\rho)$ is non-increasing.
For every $\rho\leq\rho_c$ we have $p_c(\rho)=1$ since $\nu_{\rho}$-almost all $\Lambda$ consists of a countable union of finite disjoint clusters so that $\Lambda\times\mathbb{Z}$ is a countable union of disjoint graphs extending infinitely in a single direction.
Furthermore, $p_c(1)$ equals the critical threshold for Bernoulli bond percolation on $\mathbb{Z}^3$.

\item For every $\rho>\rho_c$, $p_c(\rho)\leq \frac{1}{2}$.
In fact, $\nu_{\rho}$-almost all $\Lambda$ contains an infinite path.
The subset of $\Lambda\times\mathbb{Z}$ which projects orthogonally to such a path resembles a cramped infinite sheet.
It is indeed isomorphic to $\mathbb{Z_+}\times\mathbb{Z}$ whose critical point equals $1/2$, \cite{harris60, kesten80}. 
In particular, $\displaystyle{\lim_{\rho \to \rho_c +} p_c(\rho) \leq 1/2}$ and the curve is discontinuous at $\rho_c$.
\end{enumerate}

Figure \ref{f:spiral} shows the graph of $\rho \mapsto p_c(\rho)$ obtained numerically in \cite{grassberger17}.
As we decrease $\rho$ in the interval $(\rho_c,1]$ the (random) subgraph $C(\Lambda) \times \mathbb{Z}$ becomes thinner so that $p_c(\rho)$ increases.
The bound $\lim_{\rho \mapsto\rho_c +} p_c(\rho) \leq 1/2$ is not  sharp: As one approaches $\rho_c$ from above, the values of $p_c(\rho)$ fall strictly under $1/2$ uniformly in $\rho$.
A rigorous proof of this fact is the main contribution of the present article.
\begin{figure}[!htb]
\centering
\includegraphics[scale=0.4]{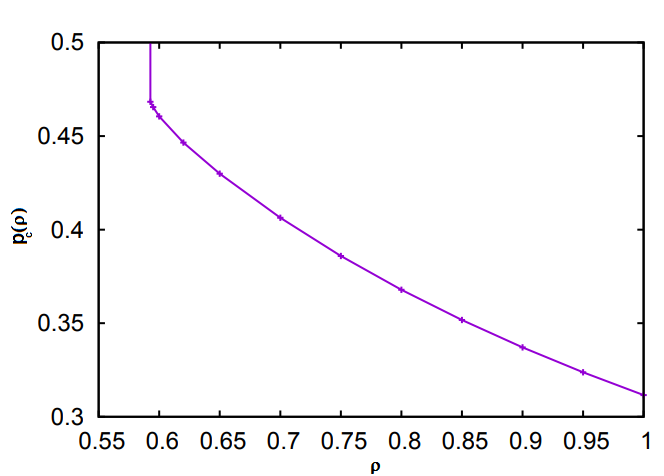}
\caption{This graph was reproduced from \cite{grassberger17} and shows critical curve $\rho\mapsto p_c(\rho)$.}
\label{f:critical_curve}
\end{figure}

\begin{theo}
\label{t:main}
There exists $\delta>0$ such that for every $\rho \in (\rho_c,1)$, $p_c(\rho)\leq\frac{1}{2}-\delta$.
\end{theo}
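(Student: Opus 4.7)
The plan is to combine a block renormalization of the two-dimensional columnar disorder (exploiting the supercriticality $\rho > \rho_c$ of $\Lambda$) with a slab-type strict inequality in the third dimension. The universal gap $\delta > 0$ will come from the Aizenman--Grimmett principle: bond percolation on the slab $\mathbb{Z}^2 \times \{0,1\}$ has critical probability strictly less than $1/2$. Fix once and for all $\delta_0 > 0$ with $p_c(\mathbb{Z}^2 \times \{0,1\}) \leq 1/2 - 2\delta_0$; the aim is to prove $p_c(\rho) \leq 1/2 - \delta_0$ for every $\rho \in (\rho_c,1)$.

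First, for each such $\rho$ and each small $\eta > 0$, a standard block argument for supercritical $2$D site percolation provides a scale $N = N(\rho, \eta)$ at which ``good'' blocks $B \subset \mathbb{Z}^2$ of side $N$---those for which $C(\Lambda) \cap B$ crosses $B$ between every pair of opposite sides in a way compatible with the crossings of the four neighboring blocks---stochastically dominate a supercritical Bernoulli site percolation on the renormalized lattice with parameter exceeding $1-\eta$. This is the only step where the assumption $\rho > \rho_c$ enters; $N$ and the block probability do not need to be uniform in $\rho$, only the eventual $\delta_0$ is.

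Next, inside each good block one can use the connectedness of $C(\Lambda)$ together with the four boundary crossings to exhibit two disjoint horizontal crossings $\gamma_1, \gamma_2 \subset C(\Lambda) \cap B$ joined by rungs lying in $C(\Lambda) \cap B$. The three-dimensional lift $(\gamma_1 \cup \gamma_2 \cup \text{rungs}) \times \mathbb{Z}$ then contains a subgraph playing the role of a finite piece of the slab $\mathbb{Z}^2 \times \{0,1\}$. By a standard finite-size criterion in the spirit of Pisztora, at $p = 1/2 - \delta_0$ the probability of an open slab-type crossing in such a box of side $L$ tends to $1$ as $L \to \infty$; applied to the lifted ladder, this gives that, once $N$ and the vertical extent of the block are large enough, a good block admits an open $3$D crossing at $p = 1/2 - \delta_0$ with probability close to $1$. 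A two-layer renormalization then produces an infinite open cluster in $C(\Lambda) \times \mathbb{Z}$ at $p = 1/2 - \delta_0$ with positive $\mathbb{P}^{\Lambda}_p$-probability, so that $p_c(\rho) \leq 1/2 - \delta_0$ with $\delta_0$ independent of $\rho$.

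The hard part will be the ladder extraction in the previous paragraph. For $\rho$ close to $\rho_c$, the cluster $C(\Lambda)$ is fractal-like and microscopic $2 \times 2$ patches inside it have vanishing density, so one cannot simply take $\gamma_1, \gamma_2$ at microscopic distance from each other. The rungs must be constructed at the renormalized scale $N$, and one must verify that the resulting macroscopic ladder is sufficiently rich---enough rungs per unit length, with a regular enough geometry---to support a slab crossing at $p = 1/2 - \delta_0$, while simultaneously keeping the probability of being a good block high. Balancing $\eta$, $N$, and the vertical size of the block so that both properties hold at once is the heart of the argument.
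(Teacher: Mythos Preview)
Your plan has a real gap at exactly the point you identify as ``the heart of the argument,'' and it does not seem closable along the lines you sketch.

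You fix $\delta_0$ once and for all from $p_c(\mathbb{Z}^2\times\{0,1\})<1/2$, and then for each $\rho$ you try to embed a piece of that slab into $C(\Lambda)\times\mathbb{Z}$. But the object you actually produce, $(\gamma_1\cup\gamma_2\cup\text{rungs})\times\mathbb{Z}$, is \emph{not} a piece of the slab: it is two half-planes $\gamma_i\times\mathbb{Z}$ joined by the strips $\text{rung}\times\mathbb{Z}$, and the rungs meet $\gamma_1$ only at a set of sites of density of order $1/N(\rho)$ along the path (a good $N$-block supplies $O(1)$ macroscopic rungs while $|\gamma_1\cap B|\ge N$). As $\rho\downarrow\rho_c$ you must take $N(\rho)\to\infty$, so this density tends to $0$. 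Two sheets glued along columns of vanishing density have critical point tending to $1/2$; this is essentially brochette percolation with strong-column density $u\to 0$, for which the gap $1/2-p_c(u)$ shrinks to $0$. Your Pisztora-type input is a statement about the \emph{fixed} slab at $p=1/2-\delta_0$; it says nothing about this degenerating family of graphs, and taking $N$ or the vertical extent larger only makes the ladder sparser, not closer to the slab. So the scheme yields some $\delta(\rho)>0$ for each fixed $\rho$, but not a uniform one.

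The paper obtains uniformity by a different mechanism: it does not look for extra structure inside the realized cluster $C(\Lambda)$, but instead exploits the \emph{remaining randomness} of $\Lambda$. The infinite occupied path $\gamma$ is built as a concatenation of lowest crossings of a spiral of rectangles, so that along $\gamma$ all but a $2$-spaced set of vertices have a neighbour whose $\Lambda$-state is still unexplored; these length-one ``threads'' are therefore occupied independently with probability $\rho\ge\rho_c$, giving enhancements at the \emph{microscopic} scale with density bounded below uniformly in $\rho$. The lift $(\gamma\cup\mathcal{H}_\gamma)\times\mathbb{Z}$ is then coupled not to a slab but to brochette percolation on $\mathbb{Z}_+\times\mathbb{Z}$ with strong-column density $u\ge u_c:=1-(1-\rho_c)^{1/4}>0$, and it is the brochette theorem of Duminil-Copin--Hil\'ario--Kozma--Sidoravicius that supplies the uniform $\delta$. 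The idea you are missing is that the uniform enhancement density has to come from fresh i.i.d.\ randomness in $\Lambda$ (via an exploration that leaves one side of the path unrevealed), not from the geometry of $C(\Lambda)$, which degenerates near~$\rho_c$.
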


Phase transition for bond percolation on site-diluted graphs was studied by Chayes and Schonmann \cite{chayes00} under the name of mixed percolation.
They allow for much more general underlying graph $G$ (requiring that it be infinite, connected, with bounded degree and non-trivial critical points for Bernoulli percolation $\rho_c^{\text{site}}(G)<1$).
However, differently from us, they consider i.i.d.\ disorder, that is, the sites in the underlying graph $G$ are removed independently from one another with a given probability $1-\rho$.
Using differential inequalities techniques inspired by \cite{aizenman91} they show that the critical curve is strictly decreasing and Lipschitz-continuous on the interval $[\rho_c^{\text{site}}(G),1]$.
Moreover, it satisfies $p_c(\rho_c^{\text{site}}(G))=1$ implying that the curve is continuous in the whole interval $[0,1]$.

In our setting, Figure \ref{f:critical_curve} suggests strongly that, apart from the continuity at $\rho_c=\rho_c^{\text{site}}(\mathbb{Z}^2)$, the results obtained in \cite{chayes00} should also hold.
Note, however that the curve fails to be continuous at $\rho_c$ which is the disconnection threshold for the dilute lattice.
It is not very hard to show continuity on the other edge of the curve, that is at $\rho=1$. 
We present a sketch of the argument in Section \ref{s:conc_rema}.

The fact that we can only obtain results on the extremes of the interval $[\rho_c,1]$ is due to the lack of good tools to deal with strict monotonicity when the disorder presents long-range dependencies.
In particular, the method of Aizenman and Grimmett \cite{aizenman91} seems to be hard to apply in this context, since part of their arguments requires that one performs local modifications.

In our case, however, the application of such methods is hidden in the use of the so-called brochette percolation \cite{duminil-copin18} as we will explain with more details below.
In fact this result is based on a quantitative control of the prefactors appearing in the differential inequalities and require some knowledge of near-critical percolation in $2$-dimensions.
For this reason we are currently unable to prove similar results for higher dimensions.
For instance, if we remove $2$-d planes independently from the $\mathbb{Z}^4$, is it the case that the critical curve remains always below $p_c^{\text{bond}}(\mathbb{Z}^{3})$?

We close this section with a brief summary of the proof of Theorem \ref{t:main}.
It has two main inputs: A classical block argument and an enhancement-type argument which are presented in Sections \ref{s:block} and \ref{s:enha} respectively. 
Below we summarize these strategies.

Fix $\rho>\rho_c$.
The first input consists of constructing an infinite sequence of crossing events taking place inside an infinite set of overlapping boxes in $\mathbb{Z}^2$.
The side length of these boxes are chosen, depending on $\rho$, in such a way as to guarantee that they are all crossed simultaneously with positive probability.
On the event that these crossings occur, we carefully pick one of them in each box in such a way that the box is split into two regions, being one of them unexplored.

Moreover, one can concatenate the crossings in order to obtain an infinite occupied path in $\mathbb{Z}^2$.
The subgraph of $\mathbb{Z}^3$ that projects orthogonally to this infinite path is isomorphic to $\mathbb{Z}_+\times \mathbb{Z}$ so the critical point for Bernoulli bond percolation restricted to it equals $1/2$.
The idea now is to attach to this set a structure that will enhance the percolation process on it.
Perhaps the most interesting point in our argument is how to have this accomplished uniformly on $\rho>\rho_c$.
For that we use the so called brochette-percolation \cite{duminil-copin18} as we explain below.

The key idea is that, as one follows the infinite path, the environment on the left-hand side is always fresh.
This allows us to attach to the path an infinite sequence of evenly spaced threads of length one having one endpoint whose state is unexplored. 
Thus the states of the tip of these threads dominate an i.i.d.\ sequence of Bernoulli random variables with positive mean, uniformly bounded away from zero.

As their states are revealed, the path will become decorated with a sequence of randomly placed threads whose endpoints are occupied.
The edges that project to these threads will serve to enhance the percolation on the cramped sheet.
However, since their positions are random, the Aizenman and Grimmett enhancement-type arguments do not apply directly.
In order to show that the decorated cramped sheet is strictly better for percolation than a simple cramped sheet we use a mild modified of the so-called brochette percolation studied in \cite{duminil-copin18} (see  Section \ref{ss:2brochette}).

\subsection{Motivation and related models}
\label{ss:motivation}

As mentioned above, this work was partially motivated by the general question on how the introduction of impurities or defects along straight lines or affine hyperplanes  shifts the critical point or modifies important features of the phase transition in percolation models.

Since the work of McCoy and Wu \cite{mccoy68}, who studied the effect of the presence of random impurities on the phase transition for the two-dimensional Ising model, this type of question has been posed in a number of different contexts including percolation \cite{aizenman91, duminil-copin18, grassberger17, hoffman05, zhang94}, oriented percolation \cite{kesten12} and the Ising model \cite{campanino91, campanino91_2}.
Also similar questions appear for the contact process in random environments \cite{bramson91, madras94, newman96} since, when representing time as an extra dimension, the presence of spacial disorder manifests along a straight vertical line.

As mentioned before, the percolation model that we study in the present paper was introduced in \cite{grassberger17}.
Part of the interest in studying this model comes from the fact that, like in the so-called Bernoulli line percolation \cite{kantor86, hilario19, schrenk16, grassberger17_2}, it presents power-law decay of connectivity for some range of the parameters covering parts of the subcritical phase and the whole supercritical phase.

While power-law decay of connectivity is also exhibited in other percolation models that feature long-range correlations along the coordinate directions (e.g.\ Winkler's percolation \cite{winkler00} and corner percolation \cite{pete08}) the model we studied here and the Bernoulli line percolation model exhibit a transition from exponential to power-law decay occurs within the subcritical phase.
Perhaps this is one of the most interesting features of these models since it indicates that the transition from the disconnected to the connected phase may not be as sharp as the one presented in ordinary Bernoulli percolation \cite{aizenman87, menshikov86}.
It is worth to mention that such a behavior is not expected to be found in models where the dependencies decay sufficiently fast with the distance.

Other models of percolation on dilute lattice were considered before.
As discussed in Section \ref{ss:intro} in \cite{chayes00} the authors consider the case where site dilution is performed in an i.i.d.\ fashion.
In \cite{hoffman05} Hoffman studies the phase transition for a the square lattice where both vertical and horizontal edges are removed independently.
In this case, even to prove the existence of a non-trivial phase transition is a hard question. 
See also, \cite{hilario19_2} for a model where edges are removed in a more general way but only along one direction.

We finish this section commenting on a percolation model named brochette percolation studied in \cite{duminil-copin18}.
It is defined as follows: 
Starting from critical bond percolation on $\mathbb{Z}^2$ where no infinite cluster exist \cite{harris60}, vertical columns are selected independently with a certain density and the probability of opening the edges that lie on these columns is slightly increased while this probability is kept critical everywhere else.
Regardless of how small the enhancement is, it leads to the existence of an infinite cluster.
More than that, for any fixed enhancement strength, even if one discounts a suitable positive amount on the probability of opening the edges that do not belong to the enhanced columns,  the newly created infinite cluster still survives.
Not only it is related to the model we study here, since it presents infinite-range dependencies along vertical lines, but, as explained above, it will also be crucial for our arguments (see Section \ref{ss:2brochette}).

\subsection{Notation}
\label{ss:notation}

In this section we introduce some of the notation that will be used throughout the rest of the paper and revisit in a precise way some of the notation already introduced in Section \ref{ss:intro}.

For a finite set $A$, $|A|$ denotes its cardinality.
A subset $A\subset\mathbb{Z}$ is said $2$-\textit{spaced} if for every pair $i \neq j$ in $A$, one has $|i-j|\geq 2$.
We write $[n] :=\{1,\ldots, n\} \subset \mathbb{Z}$ and $[n]_0:=\{0,\ldots,n\}$.

Let $G=(V(G), E(G))$ be a graph, where $V(G)$ and $E(G)$ are, respectively, the \textit{set of vertices} and \textit{set of edges} of $G$.  
We will often abuse notation and write simply $G$ when referring to $V(G)$. 
We write $x\sim y$ if $\{x, y\}\in E(G)$. 
For $A\subset G$, we set $\partial A \vcentcolon=\{x\in G\setminus A: x\sim y \text{ for some $y\in A$}\}$.
For the sake of simplicity, we write $\partial{v} = \partial\{v\}$.

A \textit{path} $\gamma$ in $A\subset G$ is a sequence of vertices $(v_0,\ldots,v_n)$ with $v_i \sim v_{i+1}$, $v_i\in A$ and $v_i\not=v_j$ whenever $i\not=j$. 
Paths in $A$ will be often regarded as subsets of $G$.
The length of a path $\gamma$ is denoted $|\gamma|$ and its boundary is denoted by $\partial\gamma$.
The for the path $\gamma=(v_0,\ldots,v_n)$, the vertices $v_0$ and $v_n$ are called the extremes of $\gamma$.
We will denote $\Gamma$ the set of all the paths in $\mathbb{Z}^2$.
A subset of a path $\gamma \in \Gamma$ that is still a path is called a subpath of $\gamma$.
A path $\gamma \in \Gamma$ is said \textit{minimal} when its only subpath that has the same extremes as $\gamma$ is itself.

We will mainly consider $G=\mathbb{Z}^2$ or $G=\mathbb{Z}^3$, where for $d \in \{2,3\}$, $\mathbb{Z}^d = \big(V(\mathbb{Z}^d), E(\mathbb{Z}^d)\big)$ is the $d$-dimensional integer lattice.
It will be convenient to regard these lattices naturally embedded in $\mathbb{R}^d$. 
Recall that we also consider the $\mathbb{Z}^2$-lattice embedded in $\mathbb{Z}^3$ by identifying it to $\mathbb{Z}^2 \times \{0\}$. 
An edge $e \in E(\mathbb{Z}^3)$ is called a vertical edge if it projects to a single site in $\mathbb{Z}^2$ otherwise it is called a \textit{horizontal edge}.
We write $o$ in order to refer to the origin of $\mathbb{R}^d$.

We will be interested in studying random elements $\Xi \in \{0,1\}^{\mathbb{Z}_+}$, $\Lambda \in \{0,1\}^{\mathbb{Z}^2}$ and $\omega \in \{0,1\}^{E(\mathbb{Z}^3)}$ where the configuration spaces are equipped with the corresponding canonical $\sigma$-field generated by the cylinders set.
A site percolation on $\mathbb{Z}^2$ is a probability measure on $\{0,1\}^{\mathbb{Z}^2}$, for instance, $\nu_\rho$ as defined in Section \ref{ss:intro}.
A bond percolation on $\mathbb{Z}^3$ is a probability measure on $\{0,1\}^{E(\mathbb{Z}^3)}$, for instance, $\mathbb{P}^\Lambda$ and $\mathbb{P}^{\rho}_p$ as defined in Section \ref{ss:intro}.
For a fixed $\omega \in \{0,1\}^{E(\mathbb{Z}^3)}$ (resp.\ $\{0,1\}^{\mathbb{Z}^2}$, seen as a subgraph of $\mathbb{Z}^3$ (resp.\ a subset of $\mathbb{Z}^2$) we write $o \leftrightarrow \infty$ if the origin lies in an infinite connected component of $\omega$ (resp.\ of $\Lambda$).
For a fixed $\Lambda \in \{0,1\}^{\mathbb{Z}^2}$, called environment, we say that a path $\gamma=(v_0,\ldots,v_n)$ in $\mathbb{Z}^2$ is occupied if all of its vertices $v_i$ are occupied (that is, $\Lambda(v_i)=1$ for all $i$).

\section*{Acknowledgements}
MH was supported by CNPq grants “Projeto Universal” (307880/2017-6) and “Produtividade em Pesquisa” (406659/2016-8) and by FAPEMIG grant “Projeto Universal” (APQ-02971-17).
MS was supported by CAPES.
RS was supported by CNPq (grant 310392/2017-9), CAPES and FAPEMIG (PPM 0600/16).

\section{Block argument}
\label{s:block}

In this section we present the block argument leading to the construction of an infinite path decorated with open threads as mentioned in Section \ref{ss:intro}.
We start by defining the region of $\mathbb{Z}^2$ where we will attempt to find such a path.

Let $e_1 = (1,0) \in \mathbb{R}^2$  and $\theta_{\pi/2}$ be the rotation by $\pi/2$ about the origin in $\mathbb{R}^2$. 
For some integer $L = L(\rho) >1$, that will be chosen latter, consider the sequence of rectangles $R_i$ defined recursively as follows (see Figure \ref{f:spiral}):
\begin{enumerate}[label=(\roman*)]
\item  $w_1 := (0,0)$ and ${R}_1:=([0, \ 4L]\times[0, \ L])\cap \mathbb{Z}^2$,
\item $w_{n+1}:= w_n+2^{n+1}L\cdot\theta_{\pi/2}^{n-1}(e_1)$ and $ R_{n+1} := w_{n+1}+ 2^n \cdot \theta_{\pi/2}^n(R_1)$.
\end{enumerate}

\begin{figure}
\centering
\begin{tikzpicture}[scale=.5, every node/.style={scale=0.8}]
\filldraw[fill=black!20!, draw=black,  thick] (0,0)rectangle (3,0.75);
\filldraw[fill=black!20!, draw=black,  thick] (3,6)rectangle (-9,3);
\draw[pattern=north west lines, pattern color=gray] (1.5,0)rectangle (3,6);
\fill[pattern=north west lines, pattern color=gray] (-9,6)rectangle (-3,-1);
\draw(-9,6) to (-9, -1);
\draw(-3,6) to (-3, -1);
\node at (1,0.375) { $R_1$};
\node at (2.25,2.25) { ${{R_2}}$};
\node at (-2.3,4.5) { ${{R_3}}$};
\node at (-6, 2) { ${{R_4}}$};
\draw[ultra thick] (0,0) to (3,0);
\filldraw (0,0) circle (3pt);
\node at (0, -0.35) {\scriptsize $(0,0)$};
\node at (1.5,-0.5) { $b_1$};
\node at (3.5,3) { $b_2$};
\draw[ultra thick] (3,6) to (3,0);
\node at (-3,6.5) {$b_3$};
\draw[ultra thick] (3,6) to (-9,6);
\node at (-9.5,1.5) { $b_4$};
\draw[ultra thick] (-9,6) to (-9,-1);
\end{tikzpicture}
\caption{The sequence of overlapping rectangles $R_i$. Their bottom sides $b_i$ are represented by the thick segments whose concatenation form the broken line spiral.}
\label{f:spiral}
\end{figure}
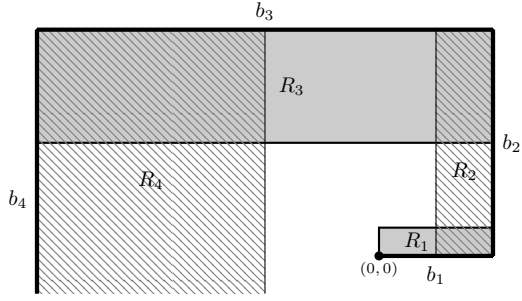
	
We also define $b_n=[w_n,w_{n+1}]$ where $[u,v]$ denotes the line segment between $u, v \in \mathbb{R}^2$, that is, $[u,v] = \{tv +(1-t)u\in \mathbb{R}^2;\, t \in [0,1]\}$. 
The concatenation of the segments $b_1, b_2,\ldots$ form the infinite broken-line spiral as shown in Figure \ref{f:spiral}.
The points $w_i$ are the ones at which this spiral breaks.
Notice that the rectangle $R_i$ overlap with $R_{i-1}$ and $R_{i+1}$.
The union of all the rectangles $R_i$ forms a spiral region inside which we will attempt to find an infinite minimal occupied path decorated with occupied threads.

It will be convenient to assign different reference frames to the successive rectangles $R_i$ so that $b_i$ is called the bottom side of $R_i$.
That is, the bottom side of each rectangle is defined to be its outermost side as seem from the center of the spiral.
Once the bottom side of $R_i$ is fixed, we naturally define its right, top and left sides following the order of appearance as we travel along its the boundary counterclockwise.
Note that for $i=1,5,9,\ldots$ the orientation coincides with the natural one so that the notion of right side $r_i$ coincides with the one in Section \ref{ss:notation}.
For $i=n \pmod{4}$ the orientation of the bottom side is rotated by $(n-1)\pi/2$ with respect to the original one.
With this convention, denote $r_i$ the right side of $R_i$.

We say that a path $\gamma$ crosses $R_i$ in the hard (resp.\ easy) direction if $\gamma$ is contained in $R_i$ and one of its extremes belong to the right (resp.\ top) side of $R_i$  and the other belongs to the left (resp.\ bottom) side of $R_i$.

Let us now assume $\rho>\rho_c$ and fix the value of $L = L(\rho)$ used in the definition of the rectangles $R_i$ above.
A straightforward adaptation of Lemma 4.12 in \cite{aiznman83} or Lemma 2.3 in \cite{chayes98} guarantees the existence of an integer $L=L(\rho)$ such that for every $i\in\mathbb{Z}_+$
\[
\nu_\rho\big(\text{$R_i$ is crossed in the hard direction}\big)\geq1-\dfrac{1}{2^{i+1}},
\] 
and therefore
\begin{equation}
\label{e:cross_hard}
\nu_\rho\Big(\bigcap_{i\in\mathbb{Z}_+}\big\{\text{$R_i$ is crossed in the hard direction}\big\}\Big)\geq \frac{1}{2}.
\end{equation}

\begin{figure}[htb]
\centering
\begin{tikzpicture}[scale=0.4] 
\filldraw[black!40!](0,0)--(0,8.8)--(5.4,8.8)--(5.4,7.2)--(4.8,7.2)--(4.8,4.8)--(3,4.8)--(3,7.8)--(1.2,7.8)--(1.2,3.6)--(1.8,3.6)--(1.8,1.2)--(0.6,1.2)--(0.6,0)--(0,0);
\filldraw[black!15!](1.2,0)--(1.2,0.6)--(1.8,0.6)--(2.4,1.2)--(2.4,2.4)--(5.4,2.4)--(5.4,1.8)--(7.8,1.8)--(7.8,3.6)--(12,3.6)--(12,2.4)--(10.2,2.4)--(10.2,1.8)--(9.6,1.8)--(9.6,0.6)--(10.8,0.6)--(10.8,1.2)--(14.4,1.2)--(14.4,2.4)--(16.8,2.4)--(16.8,1.8)--(18,1.8)--(18,0)--(1.2,0);
\draw[step = 6mm, help lines, densely dotted] (0, 0) grid (18,6);
\draw[step = 6mm, help lines, densely dotted] (0.1, 5.9) grid (5.9,8.8);
\draw[thick] (0,0) rectangle (18,6);
\draw[ultra thick] (0,0) -- (18,0);
\draw[ultra thick] (18,0) -- (18,6);
\draw[dashed, thick] (0,0) -- (0,8.8);
\draw[dashed, thick] (6,0) -- (6,8.8);
\draw (5.4,8.8)--(5.4,7.2)--(4.8,7.2)--(4.8,4.8)--(3,4.8)--(3,7.8)--(1.2,7.8)--(1.2,3.6)--(1.8,3.6)--(1.8,1.2)--(0.6,1.2)--(0.6,0);
\draw[ultra thick] (2.4,2.4)--(5.4,2.4)--(5.4,1.8)--(7.8,1.8)--(7.8,3.6)--(12,3.6)--(12,2.4)--(10.2,2.4)--(10.2,1.8)--(9.6,1.8)--(9.6,0.6)--(10.8,0.6)--(10.8,1.2)--(14.4,1.2)--(14.4,2.4)--(16.8,2.4)--(16.8,1.8)--(18,1.8);
\foreach \x/\y in {5.4/6,5.4/5.4,5.4/4.8,4.8/4.2,4.2/4.2,3.6/4.2,3/4.2,2.4/4.8,2.4/5.4,2.4/6,1.8/6,1.8/5.4,1.8/4.8,1.8/4.2,2.4/3.6,2.4/3,2.4/2.4,2.4/1.8,2.4/1.2,1.8/0.6,1.2/0.6,1.2/0} {
	\filldraw[white] (\x, \y) circle(3pt);
	\draw[black] (\x, \y) circle(3pt);}
\node[below] at (9,0) {$b_i$};
\node at (1.8,7) {$\gamma$};
\node at (12.6,3) {$\gamma'$};
\node at (5.4,4) {$l_\gamma$};
\node at (19,0) {$R_i$};
\node at (7.3,7.5) {$R_{i-1}$};
\node at (18.5,3) {$r_i$};
\node at (15.4,0.6) {\footnotesize$\mathcal{D}(R_i,\gamma,\gamma')$};
\end{tikzpicture}
\caption{The bottom and right sides of $R_i$ are represented as thick segments labeled $b_i$ and $r_i$, respectively.
The path $\gamma$ has a subpath that crosses $R_i$ in the easy way.
The set $l_\gamma$, whose vertices are represented by $\circ$, belong to does not intersect the right $r_i$. 
The path $\gamma'$ crosses $R_i$ from $l_\gamma$ to $r_i$. 
The light gray colored region corresponds to $\mathcal{D}(R_i,\gamma,\gamma').$}\label{f:lowest_crossing}
\end{figure}
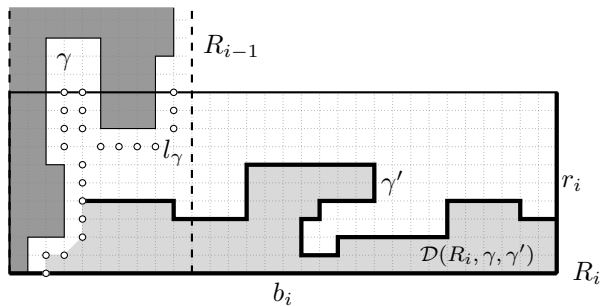

Suppose that a path $\gamma$ in $\mathbb{Z}^2$ contains a subpath that crosses  $R_i$ in the easy direction and that  $\partial\gamma\cap r_i=\emptyset$. 
We set $l_\gamma:=\partial \gamma\cap R_i$ which does not need to be connected in the sense of $\mathbb{Z}^2$ but nevertheless, it spans $\mathbb{R}_i$ in the easy direction (see Figure \ref{f:lowest_crossing} for an illustration of $l_\gamma$).
Let $\Gamma(R_i, \gamma)$ be the set of all minimal paths $\gamma'$ contained in $R_i$, such that $|\gamma'\cap l_\gamma|=1$ and that has one extreme in $l_{\gamma}$ and the other in $r_i$.
Define $\{l_\gamma \overset{R_i}{\longleftrightarrow}r_i\}$ the event that $l_{\gamma}$ and $r_i$ are \textit{connected} in $R_i$, i.e.\ the event that $\Gamma(R_i, \gamma)$ contains at least one occupied path.
Given $\Lambda\in \{l_\gamma \overset{R_i}{\longleftrightarrow}r_i\}$, denote by $\mathcal{L}_{R_i, \gamma}$ the lowest occupied minimal path in $\Gamma(R_i, \gamma)$.
For $\gamma'\in\Gamma(R_i, \gamma)$, denote by $\mathcal{D}(R_i, \gamma, \gamma')$  the region of $R_i$ located below $\gamma'$ and to the right of $l_{\gamma}$ including $\gamma'$ (see Figure \ref{f:lowest_crossing}). 
Note that, for all $\gamma'\in\Gamma(R_i, \gamma)$, the event $\{ \mathcal{L}_{R_i, \gamma}=\gamma'\}$ is measurable with respect to the state of the vertices in  $\mathcal{D}(R_i, \gamma, \gamma')$.

We are ready to construct the relevant crossing events to be used in our block argument.
Let $\gamma_0=\{-1\}\times[0,L]$ and assume that $\{l_{\gamma_0} \overset{R_1}{\longleftrightarrow}r_1\}$ occurs (note that $l_{\gamma_0} = \{0\}\times [0,L]$, the left side of $R_1$).
We can extract $\mathcal{L}_{R_1, \gamma_0}$ the lowest minimal occupied path connecting $l_{\gamma_0}$ to $r_1$.
Now, once we condition on $\mathcal{L}_{ R_1, \gamma_0}=\gamma_1$ and since such a path $\gamma_1$ necessarily has a subpath that crosses $R_2$ in the easy direction without intersecting $r_2$, we can consider the event $\{l_{\gamma_1} \overset{R_2}{\longleftrightarrow}r_2\}$.
On this event we can extract the lowest minimal occupied path $\mathcal{L}_{ R_2, \gamma_1}$.
We continue progressively:
Conditional on $\{\mathcal{L}_{ R_i, \gamma_{i-1}} =\gamma_i\}$ and $\{l_{\gamma_i} \overset{R_{i+1}}{\longleftrightarrow}r_{i+1}\}$ we denote $\mathcal{L}_{ R_{i+1}, \gamma_i}$ the lowest minimal occupied path inside $R_{i+1}$ crossing from $l_{\gamma_i}$ to $r_{i+1}$.

We say that a sequence of paths $\{\gamma_i\}_{i \in [k]}$ in $\Gamma$ is allowed if $\gamma_i\in \Gamma(R_i, \gamma_{i-1})$ for all $i\in[k]$.
For each $k$ we set
\begin{equation}
\label{e:crossing_events}
\Delta_k:=\bigcup_{\{\gamma_i\}_{i\in[k]} \text{ allowed}}\Big\{ \text{$\mathcal{L}_{R_i, \gamma_{i-1}}=\gamma_i$ for every $i \in [k]$}\Big\}
\end{equation}
where the union is disjoint.
Since the $k$ first elements in an allowed sequence $\{\gamma_i\}_{i\in[k+1]}$ still form an allowed sequence, we have $\Delta_{k+1} \subset \Delta_k$.

Moreover,
\[
\bigcap_{1\leq i\leq k}\big\{ R_i \text{ is crossed in the hard direction}\big\}\subset \Delta_k,
\]
which, in view of \eqref{e:cross_hard} implies
\begin{equation}
\label{e:nu_rho_delta_k}
\nu_\rho\Big(\bigcap_{k\in\mathbb{Z}_+}\Delta_k\Big) \geq \frac{1}{2}.
\end{equation}

The procedure is summarized in the following lemma. 
Recall that $\Gamma$ denotes the set of all paths in $\mathbb{Z}^2$.
Let $\mathcal{F}_k = \sigma(\Lambda(v)\colon v\in \cup_{j=1}^k R_j)$ be the $\sigma$-field generated by the random element $\Lambda$ restricted to the first $k$ rectangles so that $(\mathcal{F}_k)_{k\in\mathbb{Z}_+}$ defines a filtration.
Notice that the  sequence $\{\Delta_k\}_{k \in\mathbb{Z}_+}$ is adapted, that is $\Delta_k \in \mathcal{F}_k$ for every $k$.

\begin{lemma}
\label{l:main} 
For every $\rho>\rho_c$ there exists an integer $L=L(\rho)>0$ such that, the corresponding adapted sequence of crossing events $\{\Delta_k\}_{k\in\mathbb{Z}_+}$ defined in \eqref{e:crossing_events} satisfies
\begin{enumerate}[label=(\roman*)]
\item\label{ml1}  $\nu_\rho( \cap_k \Delta_k)\geq\frac{1}{2}$.
\item\label{ml2} There for every $k$, there exists a function $\Phi_k:\Delta_k\rightarrow \Gamma$, such that, for every $\Lambda \in \Delta_k$
  $\Phi_k(\Lambda)$ is an occupied minimal path having one extreme in $\partial\gamma_0$ and the other in $r_k$ (the right side of the $k$-th rectangle in the spiral, $R_k$).

\item \label{ml3} For every path $\gamma=(v_0,\ldots,v_n) \in \Phi_k(\Delta_k)$,  there is a $2$-spaced set $\mathcal{B}_\gamma\subset [n]_0$ and a set $\mathcal{H}_\gamma\subset\partial \gamma$ such that
 for every $i\in [n]_0\setminus \mathcal{B}_\gamma$, $\partial v_i\cap \mathcal{H}_\gamma\not=\emptyset$.
 Moreover, $\sigma(\Lambda(v) \colon v\in \mathcal{H}_\gamma)$, is independent of $\{\Phi_k = \gamma\}$.

\end{enumerate}
\end{lemma}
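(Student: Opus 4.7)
Parts~\ref{ml1} and \ref{ml2} are essentially routine. Part~\ref{ml1} is precisely inequality \eqref{e:nu_rho_delta_k}: the choice of $L(\rho)$ coming from the RSW-type estimates of \cite{aiznman83,chayes98} ensures $\nu_\rho(\text{$R_i$ crossed in the hard direction})\geq 1-2^{-(i+1)}$ for every $i$, and a union bound together with the containment displayed above \eqref{e:nu_rho_delta_k} concludes. For \ref{ml2}, on $\Delta_k$ the disjointness of the union in \eqref{e:crossing_events} singles out a unique allowed sequence $(\gamma_1,\ldots,\gamma_k)$. Each transition is carried out through $l_{\gamma_i}=\partial\gamma_i\cap R_{i+1}$: the extreme $v^{(i+1)}$ of $\gamma_{i+1}$ lying in $l_{\gamma_i}$ is adjacent to some vertex $u^{(i)}\in\gamma_i$, so truncating $\gamma_i$ at $u^{(i)}$ and splicing via the edge $\{u^{(i)},v^{(i+1)}\}$ produces an occupied walk from a vertex of $l_{\gamma_0}\subset\partial\gamma_0$ to the endpoint of $\gamma_k$ lying on $r_k$. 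Define $\Phi_k(\Lambda)$ to be any minimal subpath of this walk sharing its two extremes (made canonical by, e.g., a lexicographic rule).

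Part~\ref{ml3} is the substantive step. Write $\gamma=\Phi_k(\Lambda)=(v_0,\ldots,v_n)$ oriented from $v_0$ to $v_n$. For each oriented path-edge $e$ incident to $v_j$, the \emph{left-hand neighbor} of $v_j$ at $e$ is the one obtained by rotating $e$ by $90^\circ$ counterclockwise. Call $j$ \emph{good} if at least one such left-hand candidate lies off $\gamma$, and \emph{bad} otherwise. Put the bad indices into $\mathcal{B}_\gamma$ and, for each good $j$, put one canonical off-$\gamma$ left-hand neighbor $h_j$ into $\mathcal{H}_\gamma$. By construction $\mathcal{H}_\gamma\subset\partial\gamma$ and $\partial v_j\cap\mathcal{H}_\gamma\neq\emptyset$ for every $j\in[n]_0\setminus\mathcal{B}_\gamma$.

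For the $2$-spacedness of $\mathcal{B}_\gamma$, note that $v_j$ is bad precisely when $\gamma$ executes a left turn at $v_j$, i.e.\ $v_{j+1}-v_j=\theta_{\pi/2}(v_j-v_{j-1})$, for then both left candidates coincide with $v_{j+1}$ and $v_{j-1}$. Two consecutive left turns at $v_j$ and $v_{j+1}$ would give
\[
v_{j+2}-v_{j-1}=\theta_{\pi/2}(v_j-v_{j-1}),
\]
a unit vector, forcing $v_{j-1}\sim v_{j+2}$ and contradicting the minimality of $\gamma$. For the independence, $\{\Phi_k=\gamma\}$ is measurable with respect to $\sigma\bigl(\Lambda(v)\colon v\in\bigcup_{i=1}^k\mathcal{D}(R_i,\gamma_{i-1},\gamma_i)\bigr)$, since every identity $\mathcal{L}_{R_i,\gamma_{i-1}}=\gamma_i$ is $\mathcal{D}(R_i,\gamma_{i-1},\gamma_i)$-measurable by the paragraph preceding \eqref{e:crossing_events}. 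The orientations of the $R_i$ are chosen so that ``below'' the lowest crossing in each $R_i$ corresponds to the \emph{outside} of the spiral; hence the left-hand side of $\gamma$, walked from $v_0$ to $v_n$, points consistently toward the \emph{inside}, which has not been probed. Thus every $h_j$ lies in $R_i\setminus \mathcal{D}(R_i,\gamma_{i-1},\gamma_i)$ for the rectangle $R_i$ containing $v_j$, and the required independence follows from the product structure of $\nu_\rho$.

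The delicate point—which I expect to be the main obstacle—is making this last assertion fully rigorous at the overlaps $R_i\cap R_{i\pm 1}$ and near the junction vertices $u^{(i)}$, where a left-hand neighbor chosen in the orientation of $R_i$ might accidentally sit inside $\mathcal{D}(R_{i\pm 1},\gamma_{i\pm 1-1},\gamma_{i\pm 1})$. Since only consecutive rectangles in the spiral overlap, at most $O(1)$ such exceptional vertices occur per junction; they may be reclassified as bad and absorbed into $\mathcal{B}_\gamma$ without spoiling the $2$-spaced property, because consecutive junctions are separated by $\Theta(2^iL)$ steps along $\gamma$. Once this bookkeeping is carried out, all three parts of the lemma are established.
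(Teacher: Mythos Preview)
Your argument for parts~\ref{ml1} and~\ref{ml2} is fine and matches the paper. For part~\ref{ml3}, your core geometric observation---that a vertex is bad precisely when $\gamma$ makes a left turn there, and that two consecutive left turns force $v_{j-1}\sim v_{j+2}$, contradicting minimality---is exactly the mechanism the paper uses (stated there as ``if $\partial v_i$ and $\partial v_{i+1}$ were both contained in $D_k$, then $v_{i-1}\sim v_{i+2}$''). So the heart of the proof is correct and the same.

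Where your write-up diverges is in how you \emph{define} $\mathcal{H}_\gamma$. The paper works globally: it first introduces the explored region $D_k$ (the Jordan component of the spiral between $\Phi_k$ and the outer broken line $\cup_i b_i$), observes that $\{\Phi_k=\gamma\}$ is $\sigma(\Lambda(v):v\in D_k)$-measurable, and then simply sets $\mathcal{B}_\gamma=\{i:\partial v_i\subset D_k\}$ and $\mathcal{H}_\gamma=\{y_i:i\notin\mathcal{B}_\gamma\}$ with $y_i\in\partial v_i\setminus D_k$. With this definition the independence is immediate and there is nothing special to check at junctions: $D_k$ is one fixed planar region determined by $\gamma$, and ``neighbor not in $D_k$'' is a well-defined condition at every vertex of $\gamma$, overlap or not.

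Your local ``left-hand rule'' is in fact equivalent---the left side of the simple curve $\gamma$ in the simply connected spiral is precisely the complement of $D_k$---but you do not prove this equivalence, and instead manufacture a worry at the junctions. The fix you propose for that worry is where a genuine gap appears: reclassifying $O(1)$ vertices per junction as bad can destroy the $2$-spaced property, because such a vertex may already be adjacent to a left-turn vertex of $\gamma$. Your justification (``consecutive junctions are $\Theta(2^iL)$ apart'') addresses the wrong issue; the danger is adjacency to an existing bad index, not to another junction. The cleanest repair is to drop the local rule and adopt the paper's global definition via $D_k$, which makes the junction concern evaporate and leaves your (correct) $2$-spacedness argument intact.
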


Before we prove Lemma \ref{l:main} let us clarify its statement.
On the event $\Delta_k$, the function $\Phi_k$ selects a minimal occupied path $\gamma$ with one extreme in $\partial\gamma_0$ and the other in $r_k$ by concatenating the crossings in successive rectangles.
For each vertex $y \in \mathcal{H}_\gamma$, there exists $v_i \in \gamma$ such that  $y \in \partial v_i \setminus \cup_{i=1}^k\mathcal{D}(R_i,\gamma_{i-1},\gamma_i)$.
We think of $y$ as being an endpoint of a thread of length one that will be attached to $v_i$.
Vertices of $\gamma$ which are not attached to any thread are indexed by the set $\mathcal{B}_{\gamma}$.
The fact that $\mathcal{B}_{\gamma}$ is $2$-spaced implies that, for each pair of consecutive vertices in $\gamma$, at least one of these vertices has a thread attached.
Finally, the $\nu_\rho$-state of the vertices $y \in \mathcal{H}_{\gamma}$, is independent of the event that $\gamma$ was the path selected.

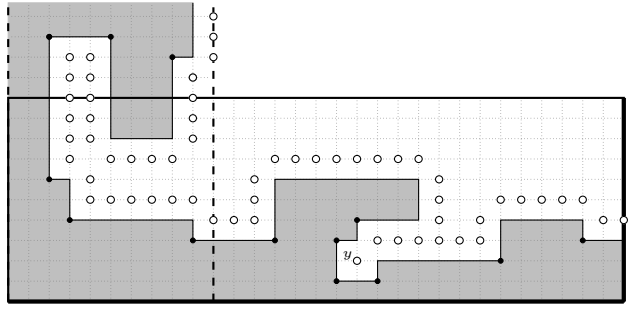
\begin{figure}
	\centering
	\begin{tikzpicture}[scale=0.45]	\filldraw[black!25!](0,0)--(0,8.8)--(5.4,8.8)--(5.4,7.2)--(4.8,7.2)--(4.8,4.8)--(3,4.8)--(3,7.8)--(1.2,7.8)--(1.2,3.6)--(1.8,3.6)--(1.8,2.4)--(3,2.4)--(5.4,2.4)--(5.4,1.8)--(7.8,1.8)--(7.8,3.6)--(12,3.6)--(12,2.4)--(10.2,2.4)--(10.2,1.8)--(9.6,1.8)--(9.6,0.6)--(10.8,0.6)--(10.8,1.2)--(14.4,1.2)--(14.4,2.4)--(16.8,2.4)--(16.8,1.8)--(18,1.8)--(18,0)--(0,0);
	\draw[step = 6mm, help lines, densely dotted] (0, 0) grid (18,6);
	\draw[step = 6mm, help lines, densely dotted] (0.1, 5.9) grid (5.9,8.8);
	\draw[thick] (0,0) rectangle (18,6);
	\draw[ultra thick] (0,0) -- (18,0);
	\draw[ultra thick] (18,0) -- (18,6);
	\draw[dashed, thick] (0,0) -- (0,8.8);
	\draw[dashed, thick] (6,0) -- (6,8.8);
	\draw (5.4,8.8)--(5.4,7.2)--(4.8,7.2)--(4.8,4.8)--(3,4.8)--(3,7.8)--(1.2,7.8)--(1.2,3.6)--(1.8,3.6)--(1.8,2.4)--(3,2.4)--(5.4,2.4)--(5.4,1.8)--(7.8,1.8)--(7.8,3.6)--(12,3.6)--(12,2.4)--(10.2,2.4)--(10.2,1.8)--(9.6,1.8)--(9.6,0.6)--(10.8,0.6)--(10.8,1.2)--(14.4,1.2)--(14.4,2.4)--(16.8,2.4)--(16.8,1.8)--(18,1.8);
	\foreach \x/\y in {6/8.4,6/7.8,6/7.2,5.4/6.6,5.4/6.6,5.4/6,5.4/5.4,5.4/4.8,4.8/4.2,4.2/4.2,3.6/4.2,3/4.2,2.4/4.8,2.4/5.4,2.4/6,2.4/6.6,2.4/7.2,1.8/7.2,1.8/6.6,1.8/6,1.8/5.4,1.8/4.8,1.8/4.2,2.4/3.6,2.4/3,3/3,3.6/3,4.2/3,4.8/3,5.4/3,6/2.4,6.6/2.4,7.2/2.4,7.2/3,7.2/3.6,7.8/4.2,8.4/4.2,9/4.2,9.6/4.2,10.2/4.2,10.8/4.2,11.4/4.2,12/4.2,12.6/3.6,12.6/3,12.6/2.4,10.8/1.8,11.4/1.8,12/1.8,12.6/1.8,13.2/1.8,13.8/1.8,10.2/1.2,13.8/2.4,14.4/3,15/3,15.6/3,16.2/3,16.8/3,17.4/2.4,18/2.4} {
	\filldraw[white] (\x, \y) circle(3pt);
	\draw[black] (\x, \y) circle(3pt);}
\node[left] at (10.35,1.35) {\tiny$y$};
\draw[fill] (5.4,1.8) circle(2pt);
\draw[fill] (1.8,2.4) circle(2pt);
\draw[fill] (1.2,3.6) circle(2pt);
\draw[fill] (1.2,7.8) circle(2pt);
\draw[fill] (1.2,7.8) circle(2pt);
\draw[fill] (3.0,7.8) circle(2pt);
\draw[fill] (4.8,7.2) circle(2pt);
\draw[fill] (7.8,1.8) circle(2pt);
\draw[fill] (10.2,2.4) circle(2pt);
\draw[fill] (9.6,1.8) circle(2pt);
\draw[fill] (9.6,0.6) circle(2pt);
\draw[fill] (10.8,0.6) circle(2pt);
\draw[fill] (14.4,1.2) circle(2pt);
\draw[fill] (16.8,1.8) circle(2pt);
	\end{tikzpicture}
	\caption{We illustrate part of the path $\gamma = \Phi_k$.
	The grey shaded area represents $D_k$.
	The black-filled dots are the sites $v_i$ for $i \in \mathcal{B}_\gamma$.
	The sites in $\mathcal{H}_\gamma$ are represented as $\circ$ and their states are independent of the event $\{\Phi_k =\gamma$\}. They can neighbor up to $4$ sites in $\gamma$, as for example, the site $y$ illustrated in the picture.}
	\label{f:lowest_crossing_2}
\end{figure}

\begin{proof}[Proof of Lemma \ref{l:main}]

Item $(i)$ is just \eqref{e:nu_rho_delta_k}.

Fix $\Lambda\in\Delta_k$, and let $\{\gamma_i\}_{i\in[k]}$ be the unique allowed sequence such that $\mathcal{L}_{R_i,\gamma_{i-1}} = \gamma_i$ for every $i \in [k]$ and let  $u_i:=\gamma_i\cap \partial\gamma_{i-1}$ be the extreme $\gamma_i$ that also lie in $l_{\gamma_{i-1}}$.

Now we define $\Phi_k(\Lambda)$ as being the path that starts at $u_1$, goes along $\gamma_1$ until hitting a neighbor of $u_2$, then follows along $\gamma_2$ until hitting a neighbor of the $u_3$ and so on until the last step when it starts at $u_{k-1}$ and follows along $\gamma_{k}$ until hitting $r_k$.
It is clear that $\Phi_k(\Lambda)$ is an occupied minimal path contained in $\cup_{i=1}^k R_k$ and that it has one extreme on $\partial\gamma_0$  and the other on $r_k$.

\begin{figure}
\hspace*{3.2cm}
\vspace*{5mm}
\begin{tikzpicture}[scale=0.5, every node/.style={scale=0.7}]
	\path [ultra thick, pattern=north west lines, pattern color=black!30!](3.2,8) to [out=-110,in=90](3,7) to [out=-150,in=-20, rounded corners] (-3,6) to [out=110,in=-120, rounded corners] (-1.7,6.8) to [out=165, in=10, rounded corners] (-9,7) to [out=-110,in=90] (-8,5.7) to [out=-110,in=30] (-12,6)--(-12,8)--(4,8)--(3.2,8);
	\path [ultra thick, pattern=north west lines, pattern color=black!30!] (0,0) -- (0,0.3) to [out=-30,in=170, rounded corners] (2, 0.6) to [out=-10, in=180, rounded corners] (3, 0.4) to [out=0, in=152](4,0.2)--(4,0)--(0,0);
	\path [ultra thick, pattern=north west lines, pattern color=black!30!] (3,0.4) to [out=0,in=152] (4, 0.2)--(4,8)--(3.2,8)  to [out=-110,in=90] (3,7) to [out=-90,in=85, rounded corners] (3.5,3.4) to [out=-175,in=20, rounded corners] (2.7,4.5)to [out=-105,in=80](3,0.4);
	\draw (0,0)rectangle (4,1);
	\draw (2,0)rectangle (4,8);
	\draw (4,4)rectangle (-12,8);
	\draw[ultra thick, black] (0,0.4) to [out=-30,in=170, rounded corners] (2, 0.6) to [out=-10, in=180, rounded corners] (3, 0.4);
	\draw[thick](3, 0.4) to [out=0, in=210](4,0.5);
	\draw[thick](3.2,8)  to [out=-110,in=90] (3,7);
	\draw[ultra thick, black] (3,7) to [out=-90,in=85, rounded corners] (3.5,3.4) to [out=-175,in=20, rounded corners] (2.7,4.5)to [out=-105,in=80](3,0.4);
	\draw[ultra thick, black] (3,7) to [out=-150,in=-20, rounded corners] (-3,6) to [out=110,in=-120, rounded corners] (-1.7,6.8) to [out=165, in=10, rounded corners] (-9,7) to [out=-110,in=90] (-8,5.7) to [out=-110,in=30] (-12,6);
	\node at (0,7) {$D_k$};
	\node at (-3.3,5.8) {$\Phi_k$};
	\end{tikzpicture}
	\caption{We illustrate the three first rectangles in the spiral region with the lowest minimal crossings within them. 
	We highlight the path $\Phi_3$ formed by the concatenation of parts of these crossings. 
	The shaded region is $D_3$.}
	\label{f:spiral_path}
	\end{figure}
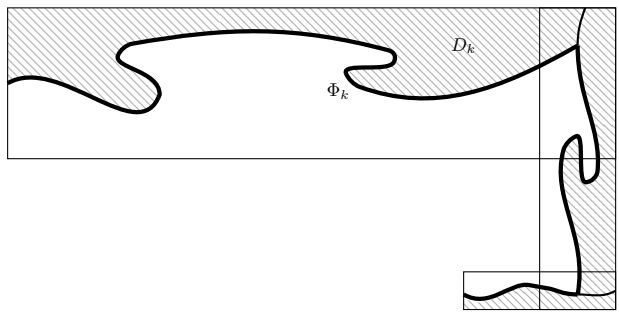

Also, $\Phi_k(\Lambda)$ divides $\cup_{i=1}^k R_i$ into two regions being  
\[
D_k(\Lambda)\subset \cup_{i=1}^k\mathcal{D}(R_i,\gamma_i, \gamma_{i-1}) 
\]
the one located between $\Phi_k(\Lambda)$ and the outermost part of the spiral.
Moreover, $\{\Phi_k = \gamma\}$ is measurable with respect to $\sigma(\Lambda(v) \colon v \in D_k)$.

Now for some $\Lambda\in \Delta_k$  let $\Phi_k(\Lambda)=\gamma=(v_0,\ldots,v_n)$ and define
\begin{equation}
\label{e:B_gamma}
B_{\gamma}:= \{i \in [n]_0 \colon \partial{v}_i \subset D_k(\Lambda)\}.
\end{equation}
We claim that $B_\gamma$ is a $2$-spaced set. 
Indeed, if $\partial v_i$ and $\partial v_{i+1}$ were both contained in $D_k(\Lambda)$, then we would have $v_{i-1}\sim v_{i+2}$ contradicting the minimality of $\gamma$.

Choose any ordering of $\mathbb{Z}^2$.
For every $i\in [n]\setminus B_{\gamma}$ let $y_i$ be the earliest vertex in $\partial v_i \setminus D_k(\Lambda)$ and define
\begin{equation}\label{H}
\mathcal{H}_\gamma:=\bigcup_{i\in [n]_0\setminus \mathcal{B}_\gamma}\{y_i\} \subset \partial \gamma \cap \big[\cup_{i=1}^k R_i\setminus D_k(\Lambda)\big].
\end{equation}

Notice that $\sigma(\Lambda(v) \colon v\notin D_k(\Lambda))$  is independent of $\{\Phi_k = \gamma\}$ since the latter is measurable with respect to the states of vertices in $D_k(\Lambda)$.
\end{proof}

We stress once more that, for the rest of the text we will fix $L = L(\rho)$ as given by Lemma \ref{l:main}. 
We will also abuse notation and write $\Delta_\gamma=\Phi_k^{-1}(\gamma)$, omitting the dependency on $k\in\Z_+$. 
It is the case that $\{\Delta_\gamma\}_{\gamma\in\Phi_k(\Delta_k)}$ form a partition of $\Delta_k$.

\section{Coupling with brochette percolation}
\label{s:enha}

In the previous section we showed that, with positive probability, under $\nu_\rho$, we can find an infinite occupied path $\gamma$ that is decorated with a set of threads $\mathcal{H}_\gamma$ whose states are unexplored.
Roughly speaking, in order to prove Theorem \ref{t:main} it remains to show that, uniformly on the realization of such pair $(\gamma, \mathcal{H}_\gamma)$, Bernoulli bond percolation on $(\gamma \cup \mathcal{H}_\gamma)\times \mathbb{Z} \subset \mathbb{Z}^3$ has a lower critical point than on $\mathbb{Z}_+\times \mathbb{Z}$.
While this results sound intuitively clear, it is not straightforward due to the random location of the occupied threads that should be used to enhance the percolation process.
As already mentioned above, the comparison will be made by mapping to a bond percolation model on $\mathbb{Z}^2$ that resembles the so-called brochette percolation model \cite{duminil-copin18}.
In Section \ref{ss:2brochette} we define the brochette percolation model precisely.
Then, in Section \ref{ss:cscheme} we will construct the desired coupling and prove Theorem \ref{t:main}.

\subsection{Brochette percolation}
\label{ss:2brochette}
 
We start this section presenting the so-called brochette percolation.
Given $\Xi\in \{0, 1\}^{\mathbb{Z}_+}$, for $i\in \mathbb{Z}_+$, the column $\{i\}\times \mathbb{Z}$ is said \textit{strong} if $\Xi(i)=1$ and $\textit{weak}$ otherwise.
Fix two parameters $0<p\leq q<1$.
Edges of the lattice $\mathbb{Z}_+\times \mathbb{Z}$ will be declared open or closed independently as follows.
Vertical edges $e\in E(\Z_+\times\Z)$ whose endpoints lie in strong columns, are declared open with probability $q$ and closed with probability $1-q$.
Every other edge in $E(\Z_+\times\Z)$ is declared open with probability $p$ and closed with probability $1-p$.
This gives rise to an inhomogeneous bond percolation on $\mathbb{Z}_+\times \mathbb{Z}$ whose law we denote $P^{\Xi}_{p,q}$.

For a measure $\mu$ on $\{0,1\}^{\mathbb{Z}_+}$ let us define
\begin{equation}\label{brochette}
{P}^{\mu}_{p,q}(\cdot):=\int {P}^{\Xi}_{p, q} (\cdot)d{\mu}(\Xi).
\end{equation}

In \cite{duminil-copin18}, $\{\Xi(i)\}_{i\in\mathbb{Z}_+}$ are assumed to be independent Bernoulli random variables with mean $u>0$ that is, $\mu = \mu_u$, where
\begin{equation}
\label{e:mu_u}
\mu_u :=\otimes_{i\in\mathbb{Z}_+} [(1-u)\delta_0 + u \delta_1].
\end{equation}
The resulting measure $P^{\mu_u}_{p,q}$ is called the brochette percolation on $\mathbb{Z}_+ \times \mathbb{Z}$.
The parameter $u$ should be understood as a density of enhanced lines.
The main result in \cite{duminil-copin18} is:
\begin{theo}[\cite{duminil-copin18}, Theorem 1, p.~481]
\label{t:brochette}
For every $u\in(0,1)$ and $\varepsilon\in (0,1/2)$ there exists $\delta = \delta(u, \varepsilon)>0$ such that 
\[
P^{\Xi}_{1/2-\delta,1/2+\varepsilon}\big(o\leftrightarrow\infty\big) >0
\]
for $\mu_u$-almost all $\Xi$.
\end{theo}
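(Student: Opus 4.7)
The plan is to combine two ingredients: Russo--Seymour--Welsh (RSW) estimates for near-critical planar percolation, and a block renormalisation in which the strong columns act as supercritical vertical highways. A natural reduction is to establish percolation first at the symmetric point $p = 1/2$, $q = 1/2 + \varepsilon$ and then propagate the conclusion to $p = 1/2 - \delta$ by a finite-size criterion, absorbing the deficit $\delta$ on a fixed scale depending only on $u$ and $\varepsilon$.

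At $p = 1/2$, fix a large scale $N = N(u, \varepsilon)$ and a rectangle $B_N = [0, 2N] \times [-N, N]$. Under $\mu_u$, with overwhelming probability $B_N$ contains at least $uN/2$ strong columns, typically spaced by $O(1/u)$. Along each such column the vertical bond process is one-dimensional Bernoulli with parameter $1/2 + \varepsilon$, so between any two prescribed heights at distance at most $N$ there is an open vertical connection with probability exponentially close to one. Between consecutive strong columns the bulk is critical Bernoulli percolation, and RSW produces horizontal crossings of rectangles of width $O(1/u)$ with probability bounded below by a positive constant $c(u) > 0$. Gluing of order $uN$ such horizontal pieces via short supercritical vertical traversals inside strong columns yields a left-right crossing of $B_N$ with probability at least $1 - e^{-c'N}$ for some $c' = c'(u, \varepsilon) > 0$.

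With this finite-size estimate I would perform a block renormalisation: a site $(i, j)$ in the coarse lattice is declared \emph{good} if the corresponding translate of $B_N$ admits open horizontal and vertical crossings, with enough overlap in neighbouring blocks to allow concatenation. Goodness depends only on a finite portion of the joint disorder $(\Xi, \omega)$, and in particular is finite-range dependent in the renormalised lattice because the $\Xi(i)$ are independent across columns and $\omega$ is independent given $\Xi$. Choosing $N$ large makes the density of good blocks arbitrarily close to $1$, so by the Liggett--Schonmann--Stacey domination theorem good blocks dominate a supercritical Bernoulli site percolation on $\mathbb{Z}^2$; the resulting infinite cluster of good blocks yields an infinite open cluster in the original model, for $\mu_u$-almost every $\Xi$. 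Passing from $p = 1/2$ to $p = 1/2 - \delta$ is then immediate: on the fixed scale $N$ each block probability depends continuously on $p$, so taking $\delta$ small enough keeps the density of good blocks above the $k$-dependent percolation threshold, and this defines $\delta = \delta(u, \varepsilon)$.

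\textbf{Main obstacle.} The principal difficulty is the quantitative control of the horizontal crossing inside $B_N$: one must balance the tiny gain $\varepsilon$, available only on a dilute set of vertical edges, against the deficit $\delta$ on the much more numerous horizontal edges, while keeping $N$ small enough that the weak bonds at $p = 1/2 - \delta$ still behave as if critical. This forces the use of near-critical RSW together with quasi-multiplicativity of crossing (and arm) events, and it is also the reason that the argument is intrinsically two-dimensional --- the analogous sharp crossing technology is not available in higher dimensions.
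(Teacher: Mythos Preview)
Note first that the paper does not itself prove Theorem~\ref{t:brochette}; it is quoted from \cite{duminil-copin18}, with only the remark that the same argument carries over to the half-plane $\mathbb{Z}_+\times\mathbb{Z}$. The paper in fact describes that proof (Section~\ref{ss:intro}) as resting on ``quantitative control of the prefactors appearing in the differential inequalities'' together with ``knowledge of near-critical percolation in $2$-dimensions''.

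Your outline has a genuine gap at the gluing step, and two difficulties compound each other. First, you assert that each of the $\sim uN$ horizontal crossings between consecutive strong columns succeeds with probability at least a constant $c(u)>0$, and then claim that their concatenation yields a crossing of $B_N$ with probability at least $1-e^{-c'N}$. But if each piece has probability bounded away from $1$, the intersection over $\sim uN$ essentially independent pieces has probability at most $c(u)^{uN}\to 0$; nothing in your argument upgrades $c(u)$ to something tending to $1$ with $N$. Second, even granting high-probability crossings of each strip, the ``supercritical vertical highways'' cannot glue them: one-dimensional Bernoulli percolation at any parameter $1/2+\varepsilon<1$ has only finite clusters, so two horizontal crossings landing on a given strong column at uncontrolled heights in $[-N,N]$ are typically not connected through that column. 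Your scheme provides no mechanism forcing those landing heights to be close.

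The proof in \cite{duminil-copin18} proceeds along a genuinely different route. It is an enhancement argument in the Aizenman--Grimmett style, carried out via Russo's differential formula and made quantitative using Kesten's near-critical scaling theory (characteristic length, quasi-multiplicativity and separation of arms). The core step shows that, at scales below the correlation length, the expected number of pivotal edges for a crossing event that lie on strong columns is at least a positive fraction of the total expected number of pivotals; this is what allows the gain $\varepsilon$ on strong columns to be traded, inside the differential inequality, against a loss $\delta$ on all other edges. Your ``Main obstacle'' paragraph correctly names exactly these tools, but they do not appear in the body of your proof --- they constitute a replacement for, not a refinement of, the block scheme you propose.
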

Strictly speaking, in \cite{duminil-copin18} the model was defined on the $\mathbb{Z}^2$-lattice. 
However the exact same proof presented there works for the half-space $\mathbb{Z}_+ \times \mathbb{Z}$.

An immediate consequence of Theorem \ref{t:brochette} is: For every $u\in(0,1)$ and every $\varepsilon \in (0,1/2)$, there exists $\delta = \delta(u,\varepsilon)>0$ and $\alpha = \alpha(u, \varepsilon)>0$ such that for every $p \geq 1/2-\delta$,  $P^{\mu_u}_{p,1/2+\varepsilon}(o\leftrightarrow\infty) > \alpha$.

For our purposes it will be useful to force $\Xi(i)$ to vanish at some indices $i$.
For this reason, let us consider for every $B \subset\mathbb{Z}_+$,  the measure 
\begin{equation}
\label{e:mu_ub}
\mu_{u,B}:=\otimes_{i\in\mathbb{Z}_+ \setminus B} \big[(1-u)\delta_0 + u\delta_1\big] \times \otimes_{i\in B} \delta_0.
\end{equation}

The following theorem is an adaptation of Theorem \ref{t:brochette}, the only difference being that instead of deciding the position of the strong columns independently we only allow for strong columns outside a $2$-spaced set.
This amounts to replace $\mu_u$ in \eqref{e:mu_u} by $\mu_{u,B}$ in \eqref{e:mu_ub}.

\begin{theo} 
\label{t:brochette_new} 
For every $\varepsilon \in (0,1/2)$ and $u\in(0,1)$, there exist $\delta=\delta(\varepsilon, u) >0$ and $\alpha=\alpha(\varepsilon, u)>0$, such that for every $2$-spaced set $B\subset\mathbb{Z}_+$ and every $p\geq 1/2-\delta$ 
\[
P^{\mu_{u,B}}_{p, \frac{1}{2}+\varepsilon}\big(o\leftrightarrow \infty\big)>\alpha.
\]
\end{theo}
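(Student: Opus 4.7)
The plan is to prove Theorem \ref{t:brochette_new} by revisiting the proof of Theorem \ref{t:brochette} in \cite{duminil-copin18} and checking that it can be carried out under the extra constraint that strong columns must avoid an arbitrary $2$-spaced set $B$. The central observation is that since $B$ is $2$-spaced, we have $|[a,b] \cap (\mathbb{Z}_+ \setminus B)| \geq (b-a)/2 - 1$ for every interval $[a,b] \subset \mathbb{Z}_+$. Combined with the fact that, under $\mu_{u, B}$, the family $\{\Xi(i)\}_{i \in \mathbb{Z}_+ \setminus B}$ consists of independent $\text{Bernoulli}(u)$ variables, this yields the following key bound: in any interval of length $n$, the number of strong columns stochastically dominates a $\text{Binomial}(\lfloor n/2 \rfloor, u)$ random variable, \emph{uniformly} in the $2$-spaced set $B$. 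Standard concentration estimates then ensure that in any sufficiently long interval the density of strong columns exceeds, say, $u/4$ with probability exponentially close to one, again uniformly in $B$.

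The argument establishing Theorem \ref{t:brochette} is a multi-scale renormalization that, at each scale, exploits the positive density of i.i.d.\ strong columns to enhance vertical crossing probabilities of rectangles and then bootstraps these estimates to produce an infinite cluster. The specific value $u$ of the density enters only through the final constants $\delta$ and $\alpha$; the overall structure of the argument is not sensitive to the precise positions of the strong columns, relying only on their independence on disjoint subsets of columns and on a positive-density lower bound. Running the same proof with the effective density $u/4$ (which is valid uniformly in $B$ by the observation above) therefore yields $\delta(\varepsilon, u) > 0$ and $\alpha(\varepsilon, u) > 0$ with the required uniformity in $B$.

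The main obstacle is verifying step by step that the arguments of \cite{duminil-copin18} only appeal to (a) the product structure of the strong-column variables on the subsets of columns actually used, all of which we can take disjoint from $B$, and (b) a density lower bound for strong columns in long intervals---both properties being preserved under $\mu_{u, B}$. A more direct reduction via stochastic domination at the level of edge configurations is not available: because of the constraint $\Xi|_B \equiv 0$, vertical edges in columns of $B$ are open only with probability $p$, whereas the corresponding vertical edges under $\mu_{u'}$ would be open with probability $p' + (q' - p')u' > p'$, so even marginal stochastic domination fails at $B$-columns. Hence the reduction to Theorem \ref{t:brochette} must be carried out inside the renormalization scheme itself rather than through a simple coupling argument.
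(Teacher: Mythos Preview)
Your proposal is correct and aligns with the paper's own treatment: the paper does not write out a proof either, stating only that it ``would follow exactly the same lines as Theorem \ref{t:brochette} with very straightforward modifications.'' You have in fact gone further than the paper by spelling out the key density observation for $2$-spaced sets and by explaining why a naive stochastic-domination reduction at the level of edge configurations is unavailable.
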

We refrain from writing down a proof here since it would follow exactly the same lines as Theorem \ref{t:brochette} with very straightforward modifications.

\begin{cor}
\label{cor:brochette_new} 
For every $\varepsilon \in (0,1/2)$, $u\in(0,1)$ and $n \in \mathbb{Z}_+$, there exist $\delta=\delta(\varepsilon, u) >0$ and $\alpha=\alpha(\varepsilon, u)>0$, such that for every $2$-spaced set $B\subset[n]_0$ and every $p\geq 1/2-\delta$
\begin{equation}
\label{e:brochette_new_n}
P^{\mu_{u,B}}_{p, \frac{1}{2}+\varepsilon}\big(\text{$o\leftrightarrow \{n\}\times \mathbb{Z}$ in $[n]_0\times \mathbb{Z}$}\big)>\alpha.
\end{equation}
\end{cor}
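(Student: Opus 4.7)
The plan is to deduce the corollary directly from Theorem~\ref{t:brochette_new} by combining it with the elementary fact that no infinite open cluster can live inside a finite-width vertical strip, plus a simple path-trimming argument.

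Given a $2$-spaced set $B\subset[n]_0$, I first regard it as a $2$-spaced subset of $\mathbb{Z}_+$, take $\delta=\delta(\varepsilon,u)>0$ and $\alpha=\alpha(\varepsilon,u)>0$ from Theorem~\ref{t:brochette_new}, and obtain
\[
P^{\mu_{u,B}}_{p,\frac{1}{2}+\varepsilon}\bigl(o\leftrightarrow\infty\text{ in }\mathbb{Z}_+\times\mathbb{Z}\bigr)>\alpha
\]
for every $p\geq 1/2-\delta$. The task is then to show that, almost surely on $\{o\leftrightarrow\infty\}$, the open cluster $C_o$ of the origin reaches $\{n\}\times\mathbb{Z}$, after which a standard truncation yields~\eqref{e:brochette_new_n}.

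To see that the cluster reaches column $n$, I rule out any infinite cluster inside the finite strip $S_n:=[0,n-1]\times\mathbb{Z}$. For each integer $y$, the $n$ vertical edges joining $[0,n-1]\times\{y\}$ to $[0,n-1]\times\{y+1\}$ form a cut of $S_n$; each such edge is open with probability at most $1/2+\varepsilon<1$, so all of them are simultaneously closed with quenched probability at least $(1/2-\varepsilon)^n>0$, uniformly in $\Xi$. Since the cut events at different heights involve disjoint edges, they are mutually independent, and Borel--Cantelli forces infinitely many such cuts to occur above and below any fixed level almost surely under $P^{\Xi}_{p,1/2+\varepsilon}$ for every $\Xi$, hence also under the annealed measure. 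Consequently no infinite cluster can be confined to $S_n$, and therefore $\{o\leftrightarrow\infty\}\subset\{C_o\cap(\{n\}\times\mathbb{Z})\neq\emptyset\}$ a.s.

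Finally, on the latter event, I pick an infinite self-avoiding open path $(v_0,v_1,\ldots)$ with $v_0=o$ inside $C_o$ and let $\tau:=\min\{k\geq 0:v_k\in\{n\}\times\mathbb{Z}\}$. Since any nearest-neighbor path from column $0$ to a column of index at least $n+1$ must first enter column $n$, every $v_k$ with $k<\tau$ lies in a column $\leq n-1$, so $(v_0,\ldots,v_\tau)$ is an open path in $[n]_0\times\mathbb{Z}$ from $o$ to $\{n\}\times\mathbb{Z}$. This yields~\eqref{e:brochette_new_n}. The only non-trivial input is Theorem~\ref{t:brochette_new} itself; both the strip-cut argument and the truncation are routine, and no additional obstacle is expected.
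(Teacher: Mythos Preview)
Your argument is correct. The paper states Corollary~\ref{cor:brochette_new} without proof, treating it as an immediate consequence of Theorem~\ref{t:brochette_new}; your derivation supplies exactly the natural justification one would give, namely that the infinite cluster guaranteed by Theorem~\ref{t:brochette_new} cannot be confined to the finite-width strip $[0,n-1]\times\mathbb{Z}$ (by the Borel--Cantelli cut argument), and hence must reach column~$n$ via a path that can be truncated to lie in $[n]_0\times\mathbb{Z}$. One cosmetic point: in your final paragraph you write ``on the latter event'' referring to $\{C_o\cap(\{n\}\times\mathbb{Z})\neq\emptyset\}$ and then choose an \emph{infinite} self-avoiding path; it would be cleaner either to work on $\{o\leftrightarrow\infty\}$ directly, or simply to take any finite open path from $o$ to a vertex of $C_o$ in column~$n$ and truncate it at its first visit to that column---the infinite path is not needed once you know the cluster touches column~$n$.
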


\subsection{Proof of Theorem \ref{t:main}}
\label{ss:cscheme}

In this section we compare $\mathbb{P}^{\rho}_{p}$ restricted to a certain random subset of $\mathbb{Z}^3$ and the brochette percolation  $P^{\mu_{u,B}}_{p,q}$ on $\mathbb{Z}_+\times\mathbb{Z}$ with an appropriate choice of parameters $u$ and $q$ and of a random 2-spaced subset $B \subset \mathbb{Z}_+$.
Then we use this comparison to prove Theorem \ref{t:main}.

Fix $\rho>\rho_c$ and $p\in(0,1)$.
Let $L = L(\rho)$ be given as in Lemma \ref{l:main}.
Define $u=u(\rho)$ and $q=q(p)$ as
\begin{equation}
\label{e:u_q_p}
u:=1-(1-\rho)^{1/4}, \,\,\,\,\,\,\,\,\, q:=p+\Big[(1-p)\big(1-(1-p)^{1/2})\big)^2\big(1-(1-p)^{1/4})\big)\Big].
\end{equation}
For $k\in\mathbb{Z}_+$ and a path $\gamma\in \Phi_k(\Delta_k)$, recall the definitions of $\mathcal{B}_\gamma$ as in Lemma \ref{l:main} and  $\Delta_\gamma=\Phi^{-1}_k(\gamma)$.
\begin{lemma}
\label{l:coupling}
Let $\rho > \rho_c$ and $p\in (0,1)$.
Fix $\gamma=(v_0,\ldots,v_n) \in \Phi_k(\Delta_k)$ and let $u=u(\rho)$ and $q=q(p)$ be given as in \eqref{e:u_q_p}. 
Then,
\begin{eqnarray}
\mathbb{P}^\rho_p\big(\text{$(v_0,0) \leftrightarrow \{v_n\}\times\mathbb{Z}$ in $(\gamma\cup\mathcal{H}_\gamma)\times\Z$} \mid \Delta_\gamma \big)\geq {P}^{\mu_{u,\mathcal{B}_\gamma}}_{p,q}\big(\text{$o\leftrightarrow \{n\}\times\mathbb{Z}$ in $[n]_0\times \mathbb{Z}$}\big).
\label{e:coupling}
\end{eqnarray}
\end{lemma}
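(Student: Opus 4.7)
The plan is to build, on the same probability space supporting $\mathbb{P}^\rho_p(\cdot\mid\Delta_\gamma)$, an explicit coupling with a brochette configuration on $[n]_0\times\mathbb{Z}$ whose law is $P^{\mu_{u,\mathcal{B}_\gamma}}_{p,q}$ and such that every open brochette path from $o$ to $\{n\}\times\mathbb{Z}$ lifts to an open $3$D path (in $\omega$) from $(v_0,0)$ to $\{v_n\}\times\mathbb{Z}$ inside $(\gamma\cup\mathcal{H}_\gamma)\times\mathbb{Z}$, via the projection $v_i\mapsto i$. Each horizontal brochette step $\{(i,z),(i+1,z)\}$ will correspond to the direct edge $\{(v_i,z),(v_{i+1},z)\}$, while each vertical brochette step $\{(i,z),(i,z+1)\}$ will be allowed to be realized either by the direct edge $\{(v_i,z),(v_i,z+1)\}$ or by the three-edge \emph{detour} $(v_i,z)\to(y_i,z)\to(y_i,z+1)\to(v_i,z+1)$ through the thread column $\{y_i\}\times\mathbb{Z}$. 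Once the coupling is in place, connectivity in the brochette will force connectivity in the $3$D picture and \eqref{e:coupling} will follow.

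I would first record the conditional structure under $\mathbb{P}^\rho_p(\cdot\mid\Delta_\gamma)$: the path $\gamma$ and the assignment $i\mapsto y_i$ are deterministic functions of $\gamma$; Lemma~\ref{l:main}(iii) implies that $\{\Lambda(y)\}_{y\in\mathcal{H}_\gamma}$ is an i.i.d.\ Bernoulli$(\rho)$ family independent of $\Delta_\gamma$; and the bond variables $\omega(e)$ for edges with both endpoints in $(\gamma\cup\mathcal{H}_\gamma)\times\mathbb{Z}$ are i.i.d.\ Bernoulli$(p)$, zeroed out on edges incident to vacant thread columns.

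The main complication, which is the source of the fractional exponents in the definitions of $u$ and $q$, is that the thread structure carries two kinds of shared randomness. A single vertex $y\in\mathcal{H}_\gamma$ can serve as $y_i$ for up to four distinct indices $i$, because $y$ has four neighbors in $\mathbb{Z}^2$ all of which may lie on the minimal path $\gamma$; separately, a single horizontal edge of the form $\{(v_i,z),(y_i,z)\}$ is traversed by the two consecutive detours at $v_i$ at levels $z-1$ and $z$. To neutralise these dependencies I would split the underlying Bernoulli variables. For each $y\in\mathcal{H}_\gamma$ write
\[
\Lambda(y)=X_1^y\vee X_2^y\vee X_3^y\vee X_4^y
\]
with $X_j^y$ i.i.d.\ Bernoulli$(u)$, so that $\Lambda(y)$ is automatically Bernoulli$\bigl(1-(1-u)^4\bigr)=$ Bernoulli$(\rho)$ by the choice of $u$; for each vertical edge of a thread column write its Bernoulli$(p)$ state as the OR of four i.i.d.\ Bernoulli$(1-(1-p)^{1/4})$ components; and for each horizontal edge $\{(v_i,z),(y_i,z)\}$ write its Bernoulli$(p)$ state as the OR of two i.i.d.\ Bernoulli$(1-(1-p)^{1/2})$ components. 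After fixing once and for all a deterministic injection $j(\cdot)$ from the (at most four) indices $i$ with $y_i=y$ into $\{1,2,3,4\}$, set $\Xi(i):=X^{y_i}_{j(i)}$ for $i\notin\mathcal{B}_\gamma$ and $\Xi(i):=0$ for $i\in\mathcal{B}_\gamma$; by construction $\{\Xi(i)\}_{i\in[n]_0}$ has law $\mu_{u,\mathcal{B}_\gamma}$.

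Finally, declare the horizontal brochette edge $\{(i,z),(i+1,z)\}$ open iff the direct $3$D edge $\{(v_i,z),(v_{i+1},z)\}$ is open, and the vertical brochette edge $\{(i,z),(i,z+1)\}$ open iff either the direct edge $\{(v_i,z),(v_i,z+1)\}$ is open or $\Xi(i)=1$ and the three split components assigned to user $j(i)$ at this level (one vertical in column $y_i$ and two horizontal) are all equal to $1$. The splittings are arranged so that distinct detours, whether at different levels $z$ for the same $i$ or at different $i$'s sharing a common thread $y$, use disjoint underlying Bernoulli variables; a direct calculation then gives
\[
1-(1-p)\bigl(1-(1-(1-p)^{1/2})^{2}(1-(1-p)^{1/4})\bigr)=q,
\]
so the coupled brochette configuration has law exactly $P^{\mu_{u,\mathcal{B}_\gamma}}_{p,q}$. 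Any open brochette path from $o$ to $\{n\}\times\mathbb{Z}$ then lifts to an open $3$D path by using either the direct edge or the detour at each vertical step; when the detour is used, $\Xi(i)=1$ forces $X^{y_i}_{j(i)}=1$ and hence $\Lambda(y_i)=1$, so the detour edges are genuinely present in $E(\Lambda\times\mathbb{Z})$. The step I expect to require the most care is the bookkeeping behind the splitting: one must verify that each thread vertex has at most four users and each relevant horizontal edge at most two uses, so that the ``OR of independent pieces'' decompositions faithfully reproduce the Bernoulli$(\rho)$ and Bernoulli$(p)$ marginals of the original variables without introducing spurious dependencies between distinct brochette edges.
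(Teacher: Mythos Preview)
Your proposal is correct and follows essentially the same coupling as the paper's sketch: split each thread site $\Lambda(y)$ into four independent Bernoulli$(u)$ pieces, split each vertical thread-column edge into four Bernoulli$(1-(1-p)^{1/4})$ pieces and each horizontal thread edge into two Bernoulli$(1-(1-p)^{1/2})$ pieces, then declare a strong vertical brochette edge open when either the direct edge or the assigned three-piece handle detour succeeds. Your bookkeeping of the two sources of sharing (up to four path vertices per thread, and two consecutive detours per horizontal thread edge) is exactly the mechanism the paper uses, and your exposition of why distinct brochette edges use disjoint split variables is in fact more explicit than the paper's.
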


The proof of the above lemma relies on a simple coupling of a percolation process $\mathbb{P}^\rho_p$ restricted to $(\gamma\cup\mathcal{H}_\gamma)\times \mathbb{Z}$ and the brochette percolation $P^{\mu_{u,\mathcal{B}_\gamma}}_{p,q}$ restricted to $[n]_0\times \mathbb{Z}$ as we sketch now.

Fixed $\gamma = (v_0,\ldots, v_n)$, we wish to associate each site $v_i$ with $i\in [n]_0\setminus\mathcal{B}_\gamma$ to an adjacent thread $y\in \mathcal{H}_\gamma$.
Since each thread $y$ may neighbor up to $4$ sites in $\gamma$ (see Figure \ref{f:lowest_crossing_2}), we split them into $4$ quarter threads and color each one of them blue independently with probability $u$.
Therefore, we can now assign to each $v_i$ with $i\in [n]_0\setminus \mathcal{B}_\gamma$ a neighboring quarter thread in an injective way.
We now couple the environments $\Lambda$ and $\Xi$ in the two processes.
Since we want to condition on $\Delta_\gamma$, we fix $\Lambda(v)=1$ for every $v \in\gamma$.
Then declare each $y \in \mathcal{H}_\gamma$ occupied (that is, $\Lambda(y) =1$) if at least one of the $4$ quarter threads resulting from it is colored blue and unoccupied (that is, $\Lambda(y)=0$) otherwise.
In order to construct $\Xi \in \{0,1\}^{\mathbb{Z}_+}$, we force $\Xi(j)=0$ if $j$ belongs to $\mathcal{B}_\gamma$ and, for each $j$ outside $\mathcal{B}_\gamma$ we let $\Xi(j)=1$ if the quarter thread assigned to the site $v_j$ is colored blue.
One can check that $\{\Lambda(v); v\in \gamma\cup\mathcal{H}_\gamma\}$ is distributed as $\nu_\rho(\, \cdot \mid \Delta_\gamma)$ and that $\{\Xi(j)\}_{j\in [n]_0}$ is distributed as $\mu_{u,\mathcal{B}_\gamma}$.

Conditional on the above realizations of $\Lambda$ and $\Xi$ we now construct the desired bond percolation processes on $(\gamma \cup \mathcal{H}_\gamma)\times \mathbb{Z}$ and $[n]_0 \times \mathbb{Z}$.
We start by defining the states of the edges in $E(\gamma \cup \mathcal{H}_\gamma)\times \mathbb{Z}$ independently as follows.
All the edges in $E(\gamma \times \mathbb{Z})$ are declared open (resp.\ closed) with probability $p$ (resp.\ $1-p$).
Now let us construct the state of edges that have at least one endpoint in $\mathcal{H}_\gamma \times \mathbb{Z}$.
If this endpoint projects orthogonally into an unoccupied thread $y\in \mathcal{H}_\gamma$, then the edge is declared closed whereas, if it projects to an occupied thread, then we decide its state according to the following procedure: 
\begin{enumerate}
    \item If $f=\{(y,z),(y,z+1)\}$ is a vertical edge, projecting into the vertex $y\in \mathcal{H}_\gamma$, then we divide $f$ into $4$ parallel edges (because $y$ itself has been previously divided into $4$ quarter threads) and color each one of these new edges green independently with probability $r=1-(1-p)^{1/4}$.
    Now we declare $f$ open if at least one of these $4$ new edges is colored green.
    \item If $e=\{(v_i,z),(y,z)\}$ is a horizontal edge so that $\Xi(i)=1$ and $y$ is the thread that had one of its quarters assigned to $v_i$, then divide it into $2$ parallel horizontal edges.
    Color each one of these new edges red independently with probability $s=1-(1-p)^{1/2}$ and declare $e$ open if at least one of these new edges is colored red.
\end{enumerate}
Notice that, the resulting percolation process on $E\big((\gamma\cup\mathcal{H}_\gamma)\times\mathbb{Z}\big)$ is distributed as $\mathbb{P}^{\rho}_p( \cdot \mid \Delta_\gamma)$ (restricted to this set).

Before we construct the brochette percolation process on $\mathbb{Z}_+\times\mathbb{Z}$, let us recall that vertical edges that project into occupied threads in $\mathcal{H}_\gamma$ have been divided into four whereas horizontal edges having one endpoint that project into occupied threads in $\mathcal{H}_\gamma$ have been divided into two.
The reason for doing so is that we can now regard the handle-shaped detours around vertical edges $\{(v_i,z),(v_i,z+1)\}$ for which $\Xi(i)=1$ as illustrated in Figure \ref{f:1desenho}.

Now, for a vertical edge, $f'=\{(i,z),(i,z+1)\}$ with $\Xi(i)=0$ or a horizontal edge, $e'=\{(i,z),(i+1,z)\}$, declare it open if the edge $f=\{(v_i,z),(v_i,z+1)\}$ or respectively $e=\{(v_i,z),(v_{i+1},z)\}$, is open.
Now for a vertical edge $f'=\{(i,z),(i,z+1)\}$ with $\Xi(i)=1$ declare it open if either the corresponding edge $f=\{(v_i,z),(v_i,z+1)\}$ is open or if the handle-shaped detour around it has the bottom and top edges colored red and one of the $4$ vertical edges green.
This occurs with probability $q$.
One can check that this defines a process in $\mathbb{Z}_+\times \mathbb{Z}$ distributed as $P^{\mu_{u}\mathcal{B}_\gamma}_{p,q}$.

Moreover, we have coupled the processes in such a way that if $\big\{\text{$o\leftrightarrow \{n\}\times\mathbb{Z}$ in $[n]_0\times \mathbb{Z}$}\big\}$ occurs, then also $\big\{\text{$(v_0,0) \leftrightarrow \{v_n\}\times\mathbb{Z}$ in $(\gamma\cup\mathcal{H}_\gamma)\times\Z$}\big\}$ does.
Thus $\eqref{e:coupling}$ holds.

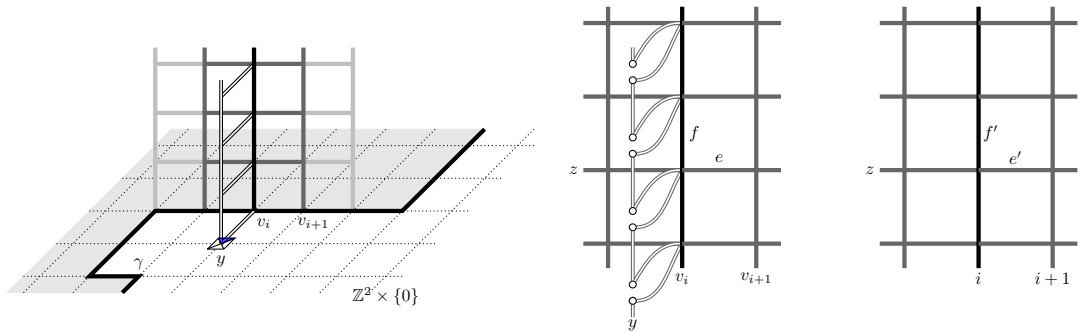
\begin{figure}[htb!]
\centering
\begin{tikzpicture}[scale=0.65, every node/.style={transform shape}]

%esquerda

\filldraw[black!10!](1/3,-1/3)--(3-1/3,0-1/3)--(3,0)--(2,0)--(4-2/3,2-2/3)--(9-2/3,2-2/3)--(10,3)--(4-1/3,3)--(1/3,-1/3);
\foreach \x in {1,2,3,4,5,6,7,8} \draw[densely dotted] (\x-1/3,-1/3)--(\x+3,3);
\foreach \x in {0,1,2,3,4} \draw[densely dotted] (\x/1.5+2/3,\x/1.5)--(\x/1.5+8+1/3,\x/1.5);
\filldraw[white](5-11/18,0+5/9)--(5-11/18+1/3,0+5/9)--(5-11/18+1/3+2/9,2/9+5/9)--(5-11/18+2/9,2/9+5/9)--(5-11/18,0+5/9);
\draw[black!25!, ultra thick] (4-2/3,4/3+1)--(5-2/3,4/3+1);
\draw[black!25!, ultra thick] (4-2/3,4/3+2)--(5-2/3,4/3+2);
\draw[black!25!, ultra thick] (4-2/3,4/3+3)--(5-2/3,4/3+3);
\draw[black!25!, ultra thick] (7-2/3,4/3+1)--(8-2/3,4/3+1);
\draw[black!25!, ultra thick] (7-2/3,4/3+2)--(8-2/3,4/3+2);
\draw[black!25!, ultra thick] (7-2/3,4/3+3)--(8-2/3,4/3+3);
\draw[black!25!, ultra thick] (4-2/3,4/3)--(4-2/3,5-1/3);
\draw[black!25!, ultra thick] (8-2/3,4/3)--(8-2/3,5-1/3);
\draw[black!60!, ultra thick] (5-2/3,4/3)--(5-2/3,5-1/3);
\draw[black!60!, ultra thick] (7-2/3,4/3)--(7-2/3,5-1/3);
\draw[black!60!, ultra thick] (5-2/3,4/3+1)--(7-2/3,4/3+1);
\draw[black!60!, ultra thick] (5-2/3,4/3+2)--(7-2/3,4/3+2);
\draw[black!60!, ultra thick] (5-2/3,4/3+3)--(7-2/3,4/3+3);
\draw[black, ultra thick] (3-1/3,0-1/3)--(3,0)--(2,0)--(4-2/3,2-2/3)--(9-2/3,2-2/3)--(10,3);
\draw[black, double](6-2/3,4/3)--(6-4/3,2/3);
\draw[black, double](6-2/3,7/3)--(6-4/3,5/3);
\draw[black, double](6-2/3,10/3)--(6-4/3,8/3);
\draw[black, double](6-2/3,13/3)--(6-4/3,11/3);
\draw[black, double](6-4/3,2/3)--(6-4/3,4);
\draw[black, ultra thick] (6-2/3,4/3)--(6-2/3,5-1/3);

%centro

\draw[black!60!, ultra thick](13.5-1,-1/3+0.5)--(13.5-1,5.5);
\draw[black!60!, ultra thick](15-1,2/3)--(13-1,2/3);
\draw[black!60!, ultra thick](15-1,2/3+1.5)--(13-1,2/3+1.5);
\draw[black!60!, ultra thick](15-1,2/3+3)--(13-1,2/3+3);
\draw[black!60!, ultra thick](15-1,2/3+4.5)--(13-1,2/3+4.5);
\draw[black!60!, double, rounded corners=1] (14-1,-0.5-1/3)--(14-1,-0.5) to  [out=0, in=240] (15-1,2/3) to  [out=180, in=60] (14-1,-0.5+1/3)--(14-1,1) to  [out=0, in=240](15-1,2/3+1.5) to  [out=180, in=60] (14-1,1+1/3)--(14-1,2.5) to  [out=0, in=240](15-1,2/3+3) to  [out=180, in=60] (14-1,2.5+1/3)--(14-1,4) to  [out=0, in=240](15-1,2/3+4.5) to  [out=180, in=60] (14-1,4+1/3)--(14-1,4+2/3);
\draw[black, ultra thick](15-1,-1/3+0.5)--(15-1,5.5);
\draw[black!60!, ultra thick](15-1,2/3)--(17-1,2/3);
\draw[black!60!, ultra thick](15-1,2/3+1.5)--(17-1,2/3+1.5);
\draw[black!60!, ultra thick](15-1,2/3+3)--(17-1,2/3+3);
\draw[black!60!, ultra thick](15-1,2/3+4.5)--(17-1,2/3+4.5);
\draw[black!60!, ultra thick](16.5-1,-1/3+0.5)--(16.5-1,5.5);

%pontos do centro

\filldraw[white] (13,-0.5) circle(2pt);
\draw (13,-0.5) circle(2pt);
\filldraw[white] (13,-0.5+1/3) circle(2pt);
\draw (13,-0.5+1/3) circle(2pt);
\filldraw[white] (13,-0.5) circle(2pt);
\draw (13,-0.5) circle(2pt);
\filldraw[white] (13,1) circle(2pt);
\draw (13,1) circle(2pt);
\filldraw[white] (13,1+1/3) circle(2pt);
\draw (13,1+1/3) circle(2pt);
\filldraw[white] (13,2.5) circle(2pt);
\draw (13,2.5) circle(2pt);
\filldraw[white]  (13,2.5+1/3) circle(2pt);
\draw  (13,2.5+1/3) circle(2pt);
\filldraw[white]  (13,4) circle(2pt);
\draw  (13,4) circle(2pt);
\filldraw[white] (13,4+1/3) circle(2pt);
\draw (13,4+1/3) circle(2pt);

%ponto da esquerda

\filldraw[blue, line width=0.1pt,fill opacity=0.6] (5-11/18+1/9+1/6,1/9+5/9)--(5-11/18+1/3+2/9,2/9+5/9)--(5-11/18+2/9,2/9+5/9)--(5-11/18+1/9+1/6,1/9+5/9);
\draw(5-11/18,0+5/9)--(5-11/18+1/3+2/9,2/9+5/9);
\draw(5-11/18+1/3,0+5/9)--(5-11/18+2/9,2/9+5/9);
\draw (5-11/18,0+5/9)--(5-11/18+1/3,0+5/9)--(5-11/18+1/3+2/9,2/9+5/9)--(5-11/18+2/9,2/9+5/9)--(5-11/18,0+5/9);
%
%rotulos
\node at (8,-0.4) {$\mathbb{Z}^2\times\{0\}$};
\node at (3,0.25) {$\gamma$};
\node at (5+1/3+0.2,4/3-0.25) {$v_i$};
\node at (6+1/3+0.2,4/3-0.25) {$v_{i+1}$};
\node at (5-1/3,2/3-0.35) {$y$};
\node at (14,-0.05) {$v_i$};
\node at (13,-1) {$y$};
\node at (15.5,-0.05){$v_{i+1}$};
\node at (20,-0.05) {$i$};
\node at (21.5,-0.05) {$i+1$};
\node at (11.8,2/3+1.5) {$z$};
\node at (17.8,2/3+1.5) {$z$};
\node at (20.25,2/3+1.5+0.75) {$f'$};
\node at (14.25,2/3+1.5+0.75) {$f$};
\node at (14.75,2/3+1.5+0.25) {$e$};
\node at (20.75,2/3+1.5+0.25) {$e'$};

%a direita
\draw[black!60!, ultra thick](18.5,-1/3+0.5)--(18.5,5.5);
\draw[black!60!, ultra thick](20,2/3)--(18,2/3);
\draw[black!60!, ultra thick](20,2/3+1.5)--(18,2/3+1.5);
\draw[black!60!, ultra thick](20,2/3+3)--(18,2/3+3);
\draw[black!60!, ultra thick](20,2/3+4.5)--(18,2/3+4.5);
\draw[black, ultra thick](20,-1/3+0.5)--(20,5.5);
\draw[black!60!, ultra thick](20,2/3)--(22,2/3);
\draw[black!60!, ultra thick](20,2/3+1.5)--(22,2/3+1.5);
\draw[black!60!, ultra thick](20,2/3+3)--(22,2/3+3);
\draw[black!60!, ultra thick](20,2/3+4.5)--(22,2/3+4.5);
\draw[black!60!, ultra thick](21.5,-1/3+0.5)--(21.5,5.5);
\end{tikzpicture}
\caption{In the left we represent the path $\gamma$ along with one of its sites $v_i$ and the thread $y$ whose quarter thread is assigned to $v_i$ and is colored blue.
In the middle we illustrate the division of the horizontal edges in $E\big((\gamma \cup \mathcal{H}_\gamma)\times \mathbb{Z}\big)$ of the type $\{(v_i,z),(y,z)\}$ into two parallel edges and the resulting handle-shaped detours.
In the right we represent edges in $E([n]_0 \times \mathbb{Z})$.
Horizontal edges $e'$ are open if the corresponding $e$ are open.
Vertical edges $f'$ along the $i$-th column are open if either the corresponding edge $f$ is open or the handle-shaped detour around it has been colored green and red.}
\label{f:1desenho} 
\end{figure}

We are now ready to put together all the ingredients needed in order to prove our main result.
\begin{proof}[Proof of Theorem \ref{t:main}]
Let $\varepsilon=(1-1/\sqrt{2})^2(1-1/\sqrt[4]{2})/4$ so that $q=1/2+2\varepsilon$, when $p=1/2$.
Define also $u_c=1-(1-\rho_c)^{1/4}$. 
For such $\varepsilon>0$ and $u_c>0$, let $\delta=\delta(u_c,\varepsilon)>0$ and $\alpha=\alpha(u_c,\varepsilon)$ be given as in Theorem \ref{t:brochette_new}.
By \eqref{e:u_q_p}, $q$ varies continuously as a function of $p$, therefore we can chose $0<\delta'\leq\delta$ so that $q\geq 1/2+\varepsilon$ when $p=1/2-\delta'$.

For each $k\in\mathbb{Z}_+$ and $\gamma=(v_0,\cdots, v_n)\in \Phi_k(\Delta_k)$, it is clear that $n \geq 2^k L$.
Denote by $\mathcal{C}$ the largest cluster that touches the segment  $\{0\}\times[0, L]\times\{0\}$ and note that $v_0\in \{0\}\times[0, L]\times\{0\}$.
	Then we have
	\begin{eqnarray}\mathbb{P}^\rho_p(|\mathcal{C}|>2^k L)&\geq& \frac{1}{2}\;\; \mathbb{P}^\rho_p(|\mathcal{C}|>2^k L| \Delta_k)\nonumber\\
	&=& \frac{1}{2}\sum_{\gamma\in \Phi_k(\Delta_k)}\mathbb{P}^\rho_p(|\mathcal{C}|>2^k L| \Delta_\gamma)\nu_\rho(\Delta_\gamma|\Delta_k)\nonumber\\
	&\geq& \frac{1}{2}\sum_{\gamma\in \Phi_k(\Delta_k)}\mathbb{P}^\rho_p(\text{$(v_0,0) \leftrightarrow \{v_n\}\times\mathbb{Z}$ in $(\gamma\cup\mathcal{H}_\gamma)\times\Z$}| \Delta_\gamma)\nu_\rho(\Delta_\gamma|\Delta_k)\nonumber\\
	&\stackrel{(\ref{e:coupling})}{\geq} &\frac{1}{2}\sum_{\gamma\in \Phi_k(\Delta_k)}  {P}^{\mu_{u,\mathcal{B}_\gamma}}_{p,q}\big(\text{$o\leftrightarrow \{n\}\times\mathbb{Z}$ in $[n]_0\times \mathbb{Z}$}\big)\nu_\rho(\Delta_\gamma|\Delta_k)\stackrel{\eqref{e:brochette_new_n}}{\geq} \frac{\alpha}{2}.\nonumber \end{eqnarray}
	
Since the lower bound above holds for every $k\in\Z_+$, this proves that $p_c(\rho)\leq \frac{1}{2}-\delta'$ uniformly for $\rho>\rho_c$ concluding the proof of Theorem \ref{t:main}.
\end{proof}

\section{Concluding Remarks}
\label{s:conc_rema}

In this work we have managed to compare the critical points of percolation on a dilute cubic lattice with columnar disorder in $\mathbb{Z}^3$ with the critical point of bond percolation in $\mathbb{Z}^2$ uniformly in the disorder intensity.
Of course, one would expect to show that the critical curve $p_c(\rho)$ is non-decreasing throughout the interval $(\rho_c,1]$.
This seems to be an interesting and hard question.

Indeed, our argument goes like this: right above $\rho=\rho_c$ we can find copies of $\mathbb{Z}^2$ embedded into $\mathbb{Z}^3$ and independent percolation at $p=1/2$ restricted to one of these copies is critical.
By considering only the effect of the unit length threads, we actually have something substantially better for percolation these embedded copies of $\mathbb{Z}^2$.
This last comparison, although intuitive is not at all trivial and relies on a coupling with brochette percolation.

Now assume that we have two densities $\rho' > \rho$ both in the interval $[\rho_c(\mathbb{Z}^2),1]$ and consider the respective 2-$d$ percolation processes in $\mathbb{Z}^2$ coupled in the usual monotone way.
The removal of columns with density $1-\rho$ will of course leave a structure that is thinner than that with density $1-\rho'$.
It seem reasonable to state that the latter is strictly better for bond percolation then the former.
However, the available techniques of differential inequalities do not seem to work in this context where enhancements are performed at random in the presence of correlations that do not decay with distance.

In our view a better understanding on how the presence of lower-dimensional disorder affects the critical point is an interesting question to be addressed.

There is however another value of $\rho$ at which we can say something related to the Figure \ref{f:critical_curve}, namely the other extreme of the interval $[\rho_c,1]$.
It is not hard to prove that the critical curve is continuous as $\rho$ approaches $1$.
Here it follows a sketch of the  argument which requires knowledge of the Grimmett and Marstrand arguments \cite{grimmett90}.
Consider the set of all paths in $\mathbb{Z}^2$ starting from the origin that are directed, that is, that only take up and right steps.
For each such path $\gamma$ let $F_\gamma=\gamma\times\mathbb{Z} \subset \mathbb{Z}^3$ be the set of sites in $\mathbb{Z}^3$ that project into $\gamma$.
Theorem \cite[Theorem A, page 447]{grimmett90} states that there exists a positive integer $k=k(\varepsilon)$ such that $p_c^{\text{site}}(2kF_\gamma + B(k)) < p_c^{\text{site}}(\mathbb{Z}^3)+\varepsilon$.
Although it does not follow from their statement, an inspection of the proof reveals that $k$ can be taken uniformly in $\gamma$.
That is, thickening $F_\gamma$ by stretching $2k$ times and then filling with boxes of radius $k$ results in a `zigzagging slab' whose critical point falls below $p_c^{\text{site}}(\mathbb{Z}^3)+\varepsilon$ for every $\gamma$.
Fixed $\varepsilon$ and the corresponding $k=k(\varepsilon)$ we now pick $\rho \in (0,1)$ such that $\rho^{(2k+1)^2}$ exceeds the critical threshold for oriented percolation on $\mathbb{Z}^2$.
Therefore, for such a value of $\rho$ one can find, with positive probability, an infinite directed path of adjacent boxes of side length $2k$ whose sites are all occupied.
Conditional on such a path, the structure of $\mathbb{Z}^3$ that projects to it is zigzag slab as above therefore, $p_c(\rho)<p_c^{\text{site}}(\mathbb{Z}^3)+\varepsilon$ which concludes the argument.

Our result leads naturally to the question whether an upper bound strictly smaller than $1/2$ still holds if the environment $\Lambda$ is distributed like the incipient infinite cluster in $\mathbb{Z}^2$.
We believe that this answer is positive but, unfortunately we fall short of proving so and leave it as an interesting open question.

\vspace{1cm}

\begin{tabular}{cl}
 \hspace{-1cm}$^*$   &\hspace{-0.9cm}  \textsc{Universidade Federal de Minas Gerais (UFMG),}\\
  &\hspace{-0.9cm}  \textsc{Dep. de Matemática, 31270-901 Belo Horizonte, MG - Brazil.}\\
  &\\
  \hspace{-1cm}$^\dagger$   &\hspace{-0.9cm} \textsc{Instituto de Matemática Pura e Aplicada(IMPA)}\\
  &\hspace{-0.9cm} \textsc{Estrada Dona Castorina 110, 22460-320 Rio de Janeiro, RJ - Brazil.}
\end{tabular}

\vspace{0.5cm}

\begin{tabular}{ll}
\hspace{-0.48cm} E-mails:    & \texttt{mhilario@mat.ufmg.br},  \\
     & \texttt{marcospy6@ufmg.br}, \\
     & \texttt{rsanchis@mat.ufmg.br}.
\end{tabular}

\end{document}